\theoremstyle{plain}
\newtheorem{theorem}{Theorem}[section]
\newtheorem*{theorem*}{Theorem}
\newtheorem{lemma}[theorem]{Lemma}
\newtheorem{corollary}[theorem]{Corollary}
\newtheorem{proposition}[theorem]{Proposition}
\newtheorem{conjecture}[theorem]{Conjecture}
\theoremstyle{definition}
\newtheorem{definition}[theorem]{Definition}
\newtheorem{remark}[theorem]{Remark}
\numberwithin{equation}{section}
\newcommand{\eps}{\varepsilon}
\newcommand{\F}{\mathcal F}
\newcommand{\Z}{\mathcal Z}
\newcommand{\N}{\mathbb N}
\newcommand{\potimes}{\,\widehat{\otimes}\,}
\newcommand{\Aff}{\mathrm{Aff}}
\newcommand{\U}{\mathcal U}
\newcommand{\V}{\mathcal V}
\newcommand{\BN}{\beta\mathbb N}
\newcommand{\Cu}{\mathrm{Cu}}
\newcommand{\Hs}{\mathcal H}
\newcommand{\BHs}{\mathcal B(\Hs)}
\title{$\mathcal{Z}$-stability and finite-dimensional tracial boundaries}
\author{Andrew Toms}
\address{\hskip-\parindent
Andrew Toms, Department of Mathematics, Purdue University, 150 North University Street, West Lafayette, IN 47907, USA.}
\email{atoms@pudue.edu}
\author{Stuart White}
\address{\hskip-\parindent
Stuart White, School of Mathematics and Statistics, University of Glasgow, 
University Gardens, Glasgow Q12 8QW, Scotland.}
\email{stuart.white@glasgow.ac.uk}
\author{Wilhelm Winter}
\address{\hskip-\parindent
Wilhelm Winter, Mathematisches Institut der WWU M\"unster, Einsteinstra\ss{}e 62, 48149, M\"unster, Germany.}
\email{wwinter@uni-muenster.de}
\thanks{Research partially supported by  EPSRC (grants No.\  EP/G014019/1 and No.\ EP/I019227/1), by the DFG (SFB 878) and by NSF (DMS-0969246). Andrew Toms is partially supported by the 2011 AMS Centennial Fellowship.}
\begin{document}
\begin{abstract}
We show that a  simple separable unital nuclear nonelementary $C^*$-algebra whose tracial state space has a compact extreme boundary with finite covering dimension admits uniformly tracially large order zero maps from matrix algebras into its central sequence algebra.  As a consequence, strict comparison implies $\Z$-stability for these algebras.  
\end{abstract}

\maketitle

\section{Introduction}

\noindent
The theme of tensorial absorption is prominent in the theory of operator algebras, particularly where the classification of these algebras is concerned.  An important step in Connes' proof that the hyperfinite $\mathrm{II}_1$ factor $\mathcal{R}$ is the unique injective $\mathrm{II}_1$ factor with separable predual exemplifies this theme:  such a factor, say $\mathcal{M}$, has the property that $\mathcal{M} \overline{\otimes} \mathcal{R} \cong \mathcal{M}$.  Another example arises in the theory of Kirchberg algebras, where the fact, due to Kirchberg, that such algebras absorb the Cuntz algebra $\mathcal{O}_\infty$ tensorially (see \cite{KP:Crelle}) plays a key role in their eventual classification via $\mathrm{K}$-theory.  For general nuclear separable $C^*$-algebras, the best tensorial absorption theorem that one can hope for is absorption of the Jiang-Su algebra $\mathcal{Z}$ ($\mathcal{Z}$-stability).  $\mathcal{Z}$-stability is a powerful tool for the classification of simple nuclear separable $C^*$-algebras, but is generally difficult to establish.  The property of strict comparison, on the other hand, is often  easier to verify.  Examples show that both properties, although by no means automatic, often occur at the same time, and are closely related to finite topological dimension. These and other considerations led the first and third named authors to conjecture the following connections between regularity properties for $C^*$-algebras:

\begin{conjecture}
Let $A$ be a simple nuclear separable unital infinite-dimensional $C^*$-algebra.  The following conditions are equivalent:
\begin{enumerate}
\item $A$ has finite nuclear dimension;
\item $A$ is $\mathcal{Z}$-stable;
\item $A$ has strict comparison.
\end{enumerate} 
\end{conjecture}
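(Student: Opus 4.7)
The four implications needed to close the cycle split into two easy and two hard directions. For $(1) \Rightarrow (2)$ I would invoke Winter's theorem that finite nuclear dimension implies $\Z$-stability for simple separable unital infinite-dimensional nuclear $C^*$-algebras. For $(2) \Rightarrow (3)$ I would invoke R\o rdam's theorem that $\Z$-stability forces almost unperforation of the Cuntz semigroup, and hence strict comparison on $A$.

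The bulk of the novel work is the implication $(3) \Rightarrow (2)$. The plan is in two stages. Stage one is the main technical result advertised in the abstract: assuming additionally that $\partial_e T(A)$ is compact with finite covering dimension $d$, strict comparison produces uniformly tracially large order zero maps $M_k \to F(A) := (A_\omega \cap A')/\mathrm{Ann}(A)$ for every $k$. I would prove this by fixing an open cover of $\partial_e T(A)$ that witnesses $d$-dimensionality as a finite colour count, using strict comparison to manufacture locally tracially large positive contractions on each cover element, and assembling these into approximately central matrix units in the central sequence algebra via a partition-of-unity argument in affine function space on $T(A)$. A Matui--Sato type excision criterion then upgrades uniformly tracially large order zero maps to $\Z$-stability. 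Stage two is the removal of the hypothesis on $\partial_e T(A)$: I would approximate the full trace simplex by an inverse system of Bauer simplices with finite-dimensional extreme boundary, coming from spectral truncations of the continuous affine functions on $T(A)$, and transfer the order zero structure back along this system, exploiting that $\Z$-stability is detected in $F(A)$.

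For $(2) \Rightarrow (1)$, I would use $\Z$-stability to embed a prime dimension-drop algebra unitally into $F(A)$, producing approximately central positive contractions $e_0, e_1 \in A$ that form a two-coloured partition of unity. Combining this with nuclearity of $A$, applied to factorise each corner $\overline{e_i A e_i}$ through a finite-dimensional matrix algebra, should yield c.p.\ approximations of the form required for nuclear dimension at most one. The main obstacle, which I expect to be the most serious of the whole programme, is obtaining a genuine uniform control on these matrix-algebra factorisations on the two corners while preserving approximate centrality, since no such uniformity is implied by nuclearity alone; this is the step where the techniques of the present paper fall short of the full conjecture, and $(2) \Rightarrow (1)$ must be regarded as essentially open in the generality stated here, likely requiring substantially new ideas—such as a tracially-driven covering-dimension bound transported from $\Z$ itself—beyond anything developed in the main theorem.
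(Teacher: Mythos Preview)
The statement you are addressing is labelled a \emph{Conjecture} in the paper, not a theorem; the paper does not claim or provide a proof of it. The paper's contribution is precisely your Stage one of $(3)\Rightarrow(2)$: under the additional hypothesis that $\partial_e T(A)$ is compact with finite covering dimension, uniformly tracially large cpc order zero maps $M_k\to A_\infty\cap A'$ exist (Theorem~\ref{T:UTL}), and then Matui--Sato's machinery (Theorem~\ref{MS}) converts this into $\mathcal{Z}$-stability. Your sketch of Stage one is broadly in line with the paper's actual argument, though one correction is in order: strict comparison is \emph{not} used to manufacture the order zero maps---that construction (Lemma~\ref{L:Key}, Proposition~\ref{P:S3}, Lemma~\ref{L:GS}) uses only nuclearity, simplicity, and the boundary hypothesis; strict comparison enters only afterwards through property (SI) in the Matui--Sato step.

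Your Stage two---removing the hypothesis on $\partial_e T(A)$ by approximating $T(A)$ through an inverse system of Bauer simplices---is not in the paper and is a genuine gap. The difficulty is that such an approximation lives at the level of the simplex, not the algebra: there is no obvious way to transport order zero maps into $A_\infty\cap A'$ along an inverse system of simplices that are not themselves $T(B)$ for subalgebras or quotients $B$ of $A$. The full implication $(3)\Rightarrow(2)$ without any trace-space restriction was open at the time of writing, as the paper's introduction makes explicit. Likewise, you are right that $(2)\Rightarrow(1)$ is not attempted here; your honest assessment that it requires substantially new ideas is accurate, but it means your proposal is not a proof of the conjecture---it is a partial reduction matching what the paper achieves, plus two directions that remain open.
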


\noindent
The implications $(1) \Longrightarrow (2)$ and $(2) \Longrightarrow (3)$ have been established by the third named author and R\o rdam, respectively (see \cite{W:Invent2} and \cite{R:IJM}).  The reversal of either of these implications is at present only partial;  proving $(3) \Longrightarrow (2)$ becomes accessible if one additionally assumes certain local approximation and divisibility  properties \cite{W:Invent2} but at least the former assumption should ultimately be unnecessary.   In a recent breakthrough, Matui and Sato lifted the local approximation hypothesis, establishing $(3) \Longrightarrow (2)$ for algebras with finitely many extremal tracial states \cite{MS:Acta}.  Subsequently it has been an urgent task to remove this restriction on the tracial state space and in this article we extend their result to algebras whose extremal tracial boundary is compact and of finite covering dimension.  The problem has received substantial attention; this  result has also been  discovered by Kirchberg and R\o{}rdam \cite{KR:InPrep} and Sato \cite{S:Pre}.

A word on the idea of our proof is in order, as the details are necessarily technical.  A loose but in our case profitable way of thinking of a simple unital separable nuclear $C^*$-algebra $A$ with nonempty tracial state space is as a collection of everywhere nonzero ``sections'', where $A$ is viewed as a kind of noncommutative bundle over its space $T(A)$ of tracial states.  For a classical topological vector bundle $\xi$ over a space $X$ we have the property of local triviality:  restriction of the bundle to a sufficiently small neighborhood is a Cartesian product of the neighbourhood with a (complex) vector space having the same rank as $\xi$.  The complexity of $\xi$ is generated by the way in which these local trivialisations are patched together.  The analog of local trivialisation in our case comes from approximately central order zero maps with finite-dimensional domain which are large in a small open neighbourhood in $T(A)$.  Our objective is to construct an approximately central order zero map which is globally large over $T(A)$, and so we look to glue together the maps which work locally.  To do this we use nuclearity, and in particular the existence of approximate diagonals, to prove that there exist positive approximately central contractions in $A$ which, at the level of traces, represent indicator functions for open subsets of the extreme tracial boundary.  Indeed, any continuous strictly positive affine function on $T(A)$ is uniformly realised by positive elements in an approximately central manner (Lemma \ref{L:CT}).   Using suitable functions arising from open covers of $\partial_eT(A)$ we can patch together the local trivialisations to arrive at an order zero map which is uniformly bounded away from zero in trace. The bound we  obtain depends on the covering dimension of $\partial_eT(A)$ and it is here where finite dimensionality arises.  

Throughout the entire paper we work with central sequence algebras. The importance of these to tensorial absorption results dates back to McDuff's characterisation of separably acting II$_1$ factors which absorb the hyperfinite II$_1$ factor  \cite{McD:PLMS}.  In the $C^*$-setting central sequence algebras are intimately connected with with properties of stablity under tensoring by $\mathcal O_\infty$ and $\mathcal Z$, \cite{K:Able}.

The hypothesis of a compact extreme tracial boundary with finite covering dimension arose in \cite{DT:JFA}, where it was shown that for a simple algebra with strict comparison satisfying the said hypothesis, the Cuntz semigroup is almost divisible.  The main result in this paper can be viewed as a proof that this almost divisibility can occur in an almost central manner. 

Our paper is organised as follows. In Section \ref{Sect2} we introduce notation and review the relevant background from \cite{MS:Acta}. The main technical result (Lemma \ref{L:Key}) is established in Section \ref{Sect3}, and we show how the case of zero dimensional compact extreme tracial boundaries follows directly from this lemma. In the fourth and last section we extend to higher dimensional boundaries.  Our method for doing this differs from those in \cite{KR:InPrep,S:Pre} as we marry the ideas needed to extend \cite{MS:Acta} to the zero dimensional compact extremal case with the geometric sequence arguments developed by the third named author in \cite{W:Invent1,W:Invent2}.

\section{Uniformly tracially large order zero maps}\label{Sect2}

\noindent
Recall that a completely positive (cp) map $\phi:A\rightarrow B$ between $C^*$-algebras is said to be \emph{order zero} if it preserves orthogonality, i.e. if $e,f\in A_+$ have $ef=0$, then $\phi(e)\phi(f)=0$.  The structure theory of these maps is developed in \cite{WZ:MJM}, which in particular establishes a functional calculus. Given a completely positive and contractive (cpc) order zero map $\phi:A\rightarrow B$, and a positive contractive function $f\in C_0(0,1]$, there exists a cpc order zero map $f(\phi):A\rightarrow B$. For projections $p\in A$, this map satisfies 
\begin{equation}
f(\phi)(p)=f(\phi(p)).
\end{equation}
Secondly, given a cpc order zero map $A\rightarrow B$ and a tracial state $\tau:B\rightarrow\mathbb C$, the composition $\tau\circ\phi$ defines a positive tracial functional on $A$, \cite[Corollary 4.4]{WZ:MJM}. The other key fact we need is that order zero maps with finite dimensional domains are projective in the sense of Lemma \ref{L:Proj} below. This follows from the duality between order zero maps with domain $A$ and $^*$-homomorphisms from the cone $C(A)=C_0(0,1]\otimes A$ (see \cite{WZ:MJM}) and Loring's projectivity of cones over finite dimensional $C^*$-algebras \cite{L:Book}.  

\begin{lemma}\label{L:Proj}
Let $A,B,F$ be  $C^*$-algebras with $F$ finite dimensional and let $q:A\twoheadrightarrow B$ a surjective $^*$-homomorphism. Given a cpc order zero map $\phi:F\rightarrow B$, there exists a cpc order zero map $\tilde{\phi}:F\rightarrow A$ with $q\circ\tilde{\phi}=\phi$.
\end{lemma}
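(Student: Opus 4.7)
The plan is to invoke the two black-box ingredients mentioned right before the statement: (a) the Winter--Zacharias correspondence between cpc order zero maps out of a $C^*$-algebra $F$ and $^*$-homomorphisms out of the cone $CF = C_0(0,1]\otimes F$, and (b) Loring's theorem that $CF$ is projective when $F$ is finite dimensional. The duality sends a cpc order zero map $\phi:F\to B$ to the $^*$-homomorphism $\pi_\phi:CF\to B$ determined by $\pi_\phi(\iota\otimes x)=\phi(x)$ for $x\in F$, where $\iota\in C_0(0,1]$ is the function $\iota(t)=t$; conversely, any $^*$-homomorphism $\pi:CF\to B$ produces a cpc order zero map $x\mapsto\pi(\iota\otimes x)$, and these assignments are mutually inverse.

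First I would form $\pi_\phi:CF\to B$ from the given cpc order zero map $\phi:F\to B$ using the correspondence. Next, since $F$ is finite dimensional, the cone $CF$ is projective by \cite{L:Book}, so the surjection $q:A\twoheadrightarrow B$ admits a $^*$-homomorphism $\tilde\pi:CF\to A$ with $q\circ\tilde\pi=\pi_\phi$. Finally, translate $\tilde\pi$ back through the correspondence to define
\begin{equation*}
\tilde\phi:F\to A,\qquad \tilde\phi(x)=\tilde\pi(\iota\otimes x).
\end{equation*}
Then $\tilde\phi$ is automatically cpc and order zero, and the lifting identity is immediate from
\begin{equation*}
q(\tilde\phi(x))=q(\tilde\pi(\iota\otimes x))=\pi_\phi(\iota\otimes x)=\phi(x),\qquad x\in F.
\end{equation*}

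There is essentially no genuine obstacle here once the two cited results are accepted; the only routine point worth checking is the naturality of the correspondence with post-composition by $^*$-homomorphisms, namely that for any $^*$-homomorphism $\rho$ the order zero map associated with $\rho\circ\tilde\pi$ is $\rho\circ\tilde\phi$. This follows directly from the defining formula $\tilde\phi(x)=\tilde\pi(\iota\otimes x)$, and applied with $\rho=q$ it yields the required compatibility $q\circ\tilde\phi=\phi$. Thus the lemma is a clean packaging of the Winter--Zacharias structure theorem with Loring's projectivity result.
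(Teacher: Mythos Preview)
Your proposal is correct and follows exactly the approach the paper indicates in the paragraph preceding the lemma: it invokes the Winter--Zacharias duality between cpc order zero maps and $^*$-homomorphisms from the cone, together with Loring's projectivity of cones over finite-dimensional $C^*$-algebras. The paper does not spell out the details beyond that sentence, so your write-up is simply a faithful expansion of the cited argument.
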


Given a $C^*$-algebra $A$, we denote the quotient $C^*$-algebra $\ell^\infty(A)/c_0(A)$ by $A_\infty$ and refer to this as the \emph{sequence algebra of $A$}. We have a natural $^*$-homomorphism from $A$ into $\ell^\infty(A)$ obtained by regarding each element of $A$ as a constant sequence in $\ell^\infty(A)$. Following this with the quotient map from $\ell^\infty(A)$ into $A_\infty$, we obtain an embedding $A\hookrightarrow A_\infty$, and we use this to regard $A$ as a $C^*$-subalgebra of $A_\infty$ henceforth.  In this way we can form the relative commutant $A_\infty\cap A'$, which is referred to as the \emph{central sequence algebra} of $A$. A bounded sequence $(x_n)_{n=1}^\infty$ in $A$ is said to be \emph{central} if its image $[(x_n)_{n=1}^\infty]$ in $A_\infty$ lies in $A_\infty\cap A'$.

We write $T(A)$ for the tracial state space of $A$.   Given a sequence $(\tau_n)_{n=1}^\infty$ in $T(A)$ and a free ultrafilter $\omega\in\BN\setminus\N$, the trace
$$
(x_n)_{n=1}^\infty\mapsto\lim_{n\rightarrow\omega}\tau_n(x_n)
$$
on $\ell^\infty(A)$ induces a trace on $A_\infty$. We write $T_\infty(A)$ for the collection of all traces on $A_\infty$ arising in this fashion.  When $\tau_n=\tau\in T(A)$ for all $n$, we write $\tau_\omega$ for the resulting trace in $T_\infty(A)$. We use the traces in $T_\infty(A)$ to define uniformly tracially large order zero maps into $A_\infty$.  

\begin{definition}\label{D:TLM}
Let $A$ be a separable unital $C^*$-algebra with $T(A)\neq\emptyset$.  A completely positive and contractive order zero map $\Phi:M_k\rightarrow A_\infty$ is \emph{uniformly tracially large} if $\tau(\Phi(1_k))=1$ for all $\tau\in T_\infty(A)$.
\end{definition}

By Lemma \ref{L:Proj}, every cpc order zero map $M_k\rightarrow A_\infty$ lifts to a cpc order zero map $M_k\rightarrow\ell^\infty(A)$ which consists of a sequence of cpc order zero maps $M_k\rightarrow A$.  We can rephrase the uniformly tracially large condition in terms of these liftings and traces on $A$. Indeed, the definition of $T_\infty(A)$ is designed to make it easy to manipulate conditions of the form (\ref{L:TI1}).

\begin{lemma}\label{L:TI}
Let $A$ be a separable unital $C^*$-algebra with $T(A)\neq\emptyset$.  Let   $\Phi:M_k\rightarrow A_\infty$ be a cpc order zero map. Then $\Phi$ is uniformly tracially large if and only if any lifting $(\phi_n):M_k\rightarrow\ell^\infty(A)$ of $\Phi$ to a sequence of cpc order zero maps satisfies 
\begin{equation}\label{L:TI1}
\lim_{n\rightarrow\infty}\min_{\tau\in T(A)}\tau(\phi_n(1_k))=1.
\end{equation}
\end{lemma}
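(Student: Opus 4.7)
My plan is to unwind the definition of $T_\infty(A)$ via a standard ultrafilter argument, exploiting two facts: (i) any lift $(\phi_n)$ of $\Phi$ gives $\Phi(1_k) = [(\phi_n(1_k))]$, so traces in $T_\infty(A)$ evaluate at $\Phi(1_k)$ as $\omega$-limits of $\tau_n(\phi_n(1_k))$; and (ii) a bounded real sequence tends to $1$ in the usual sense if and only if it tends to $1$ along every $\omega\in\BN\setminus\N$, equivalently if and only if every infinite $S\subseteq\N$ contains indices on which the sequence is near $1$.

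For the direction $(\Leftarrow)$, suppose a lift $(\phi_n)$ of $\Phi$ to a sequence of cpc order zero maps satisfies (\ref{L:TI1}), and take an arbitrary $\tau\in T_\infty(A)$ arising from $(\tau_n)\subseteq T(A)$ and $\omega$. Contractivity of $\phi_n$ gives
\[
\min_{\sigma\in T(A)}\sigma(\phi_n(1_k))\;\leq\;\tau_n(\phi_n(1_k))\;\leq\;1,
\]
and (\ref{L:TI1}) together with a squeeze forces $\tau(\Phi(1_k))=\lim_{n\to\omega}\tau_n(\phi_n(1_k))=1$, as required.

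For $(\Rightarrow)$, I would assume $\Phi$ is uniformly tracially large and suppose for contradiction that some lift $(\phi_n)$ violates (\ref{L:TI1}). Then there exist $\varepsilon>0$ and an infinite $S\subseteq\N$ on which $\min_{\sigma\in T(A)}\sigma(\phi_n(1_k))\leq 1-\varepsilon$. Weak-$*$ compactness of $T(A)$ (since $A$ is unital) and weak-$*$ continuity of $\sigma\mapsto\sigma(\phi_n(1_k))$ let me select minimising $\tau_n\in T(A)$ for each $n\in S$; I extend $(\tau_n)$ arbitrarily off $S$ and pick a free ultrafilter $\omega\in\BN\setminus\N$ with $S\in\omega$. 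The induced trace $\tau\in T_\infty(A)$ then satisfies $\tau(\Phi(1_k))\leq 1-\varepsilon$, contradicting uniform tracial largeness.

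No genuine obstacle arises: the lemma is essentially a dictionary between sequence-algebra language and sequential language. The only mild subtlety is the selection of minimising traces $\tau_n$, handled by weak-$*$ compactness of $T(A)$ (one could instead take near-minimisers and avoid compactness if necessary); the existence of a cpc order zero lift to $\ell^\infty(A)$ in the first place is already provided by Lemma \ref{L:Proj}.
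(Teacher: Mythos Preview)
Your argument is correct and follows essentially the same approach as the paper's proof: the paper also dismisses the $(\Leftarrow)$ direction as immediate and, for $(\Rightarrow)$, selects traces $\tau_n$ with $\tau_n(\phi_{m_n}(1_k))\leq 1-\eps$ along a subsequence and builds a trace in $T_\infty(A)$ from them. The only cosmetic difference is that the paper reindexes via an increasing subsequence $\{m_n\}$ and takes $\rho:[(x_n)]\mapsto\lim_{n\to\omega}\tau_n(x_{m_n})$, whereas you keep the original indexing and place the bad set $S$ inside a free ultrafilter; these are equivalent formulations of the same idea.
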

\begin{proof}
That (\ref{L:TI1}) implies that $\Phi$ is uniformly tracially large is immediate.  For the converse, suppose $\Phi:M_k\rightarrow A_\infty$ is a uniformly tracially large cpc order zero map but (\ref{L:TI1}) fails for some  lifting $(\phi_n)$.  Then, there exists $\eps>0$, an increasing sequence $\{m_n\}_{n=1}^\infty$ in $\N$ and traces $\tau_n\in T(A)$ such that $\tau_n(\phi_{m_n}(1_k))\leq 1-\eps$ for all $n\in\N$.  Given a free ultrafilter $\omega$, the map $\rho:[(x_n)_{n=1}^\infty]\mapsto \lim_{n\rightarrow\omega}\tau_n(x_{m_n})$ defines a trace in $T_\infty(A)$, which has $\rho(\Phi(1_k))\leq 1-\eps$, contrary to hypothesis. 
\end{proof}

\begin{remark}\label{R:OT}
In a similar vein, for a fixed trace $\tau\in T(A)$ a cpc order zero map $\Phi:M_k\rightarrow A_\infty$ has $\tau_\omega(\Phi(1_k))=1$ for all $\omega\in\BN\setminus\N$ if and only if any lifting $(\phi_n)_n$ of $\Phi$ to a sequence of cpc order zero maps $M_k\rightarrow A$ has $\lim_{n\rightarrow\infty}\tau(\phi_n(1_k))=1$.
\end{remark}

Via functional calculus and a standard central sequence technique, uniformly tracially large cpc order zero maps $M_k\rightarrow A_\infty\cap A'$ give rise to the maps produced by \cite[Lemma 3.3]{MS:Acta}.

\begin{lemma}\label{L:MS33}
Let $A$ be a separable unital $C^*$-algebra with $T(A)\neq \emptyset$. Suppose that there exists a uniformly tracially large cpc order zero map $\Phi:M_k\rightarrow A_\infty\cap A'$. Then the conclusion of \cite[Lemma 3.3]{MS:Acta} holds for $A$. That is there exists a cpc order zero map $\Psi:M_k\rightarrow A_\infty\cap A'$ and a central sequence $(c_n)_{n=1}^\infty$ of positive contractions in $A$ such that
\begin{equation}\label{L:MS33:1}
\lim_{n\rightarrow\infty}\max_{\tau\in T(A)}|\tau(c_n^m)-1/k|=0
\end{equation}
for any $m\in\mathbb N$ and $\Psi(e)=[(c_n)_{n=1}^\infty]$ for some minimal projection $e\in M_k$.  
\end{lemma}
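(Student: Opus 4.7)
The natural candidate is to take $\Psi = \Phi$ itself and set $c_n = \phi_n(e)$ for a fixed lifting $(\phi_n)$ of $\Phi$ to a sequence of cpc order zero maps $M_k \to A$ (which exists by Lemma \ref{L:Proj} applied to the quotient $\ell^\infty(A) \twoheadrightarrow A_\infty$). Centrality of $(c_n)$ is then immediate from the fact that $\Phi$ takes values in $A_\infty \cap A'$; the content of the lemma lies in verifying the moment condition~(\ref{L:MS33:1}).

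The plan for the moment condition is to first establish the ``ultrapower'' version $\tau(\Phi(e)^m) = 1/k$ for every $\tau \in T_\infty(A)$ and every $m \in \mathbb{N}$, and then deduce the uniform statement~(\ref{L:MS33:1}) by the same contradiction argument used in the proof of Lemma \ref{L:TI}. To handle the moments, I would use the order zero functional calculus of \cite{WZ:MJM}: for $f_m(t) = t^m \in C_0(0,1]$, the cpc order zero map $f_m(\Phi) \colon M_k \to A_\infty$ satisfies $f_m(\Phi)(p) = \Phi(p)^m$ on any projection $p$, and $\tau \circ f_m(\Phi)$ is a positive tracial functional on $M_k$, hence a nonnegative multiple of the unique tracial state $\mathrm{tr}_k$. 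Consequently,
\begin{equation*}
\tau(\Phi(e)^m) \;=\; \tau(f_m(\Phi)(e)) \;=\; \tfrac{1}{k}\,\tau(f_m(\Phi)(1_k)).
\end{equation*}
Writing $1_k = \sum_{i=1}^k e_{ii}$ as a sum of minimal projections and using that the $\Phi(e_{ii})$ are pairwise orthogonal (order zero), I get $f_m(\Phi)(1_k) = \sum_i \Phi(e_{ii})^m = \Phi(1_k)^m$.

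It remains to show $\tau(\Phi(1_k)^m) = 1$ for all $\tau \in T_\infty(A)$. Since $\Phi(1_k)$ is a positive contraction with $\tau(\Phi(1_k)) = 1 = \tau(1)$, we have $\tau(1 - \Phi(1_k)) = 0$. Passing to the tracial GNS representation of $\tau$ on $A_\infty$, the positive element $1 - \Phi(1_k)$ becomes zero, so $\Phi(1_k)$ represents the identity and $\tau(\Phi(1_k)^m) = 1$ follows. Thus $\tau(\Phi(e)^m) = 1/k$ for every $\tau \in T_\infty(A)$ and every $m$.

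Finally, to conclude~(\ref{L:MS33:1}), I would run the contrapositive argument from Lemma \ref{L:TI}: if the maximum fails to converge to $1/k$ then, after extracting, I obtain $\varepsilon > 0$, an increasing sequence $(m_n)$ and traces $\tau_n \in T(A)$ with $|\tau_n(\phi_{m_n}(e)^m) - 1/k| \geq \varepsilon$; the associated ultralimit trace in $T_\infty(A)$ would then evaluate $\Phi(e)^m$ away from $1/k$, contradicting what was just established. The only minor subtlety I anticipate is the GNS step verifying $\tau(\Phi(1_k)^m) = 1$, but this is standard once one passes to the tracial bicommutant; everything else is direct bookkeeping with order zero functional calculus.
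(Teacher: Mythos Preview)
Your argument is correct and in fact more direct than the paper's. The paper does \emph{not} take $\Psi=\Phi$; instead it builds a new $\Psi$ by a diagonal argument, lifting each $\Phi^{1/m}$ to a sequence $(\phi^{(m)}_n)_n$ of cpc order zero maps and choosing indices $r_s$ so that $\tau(\phi^{(s)}_{r_s}(1_k)^s)\geq 1-1/s$ for all $\tau$, then setting $\Psi$ to be the map induced by $(\phi^{(s)}_{r_s})_s$. This guarantees $\tau(\Psi(1_k)^m)=1$ for every $m$ and $\tau\in T_\infty(A)$, after which the deduction of~(\ref{L:MS33:1}) proceeds exactly as you describe via Lemma~\ref{L:TI} and the trace identity $\tau(\psi_n^m(e))=\tfrac{1}{k}\tau(\psi_n^m(1_k))$.

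Your key observation is that this diagonal step is unnecessary: the uniformly tracially large hypothesis already forces $\tau(\Phi(1_k)^m)=1$ for every $m$, since $0\leq 1-\Phi(1_k)^m\leq m\,(1-\Phi(1_k))$ on the spectrum of the positive contraction $\Phi(1_k)$ and $\tau(1-\Phi(1_k))=0$. (Your GNS phrasing amounts to the same thing; the elementary functional-calculus inequality is perhaps cleaner than invoking that the trace vector is separating.) Interestingly, the paper itself notes in the remark following Lemma~\ref{L:OT} that in that specific situation one ``does not have to run the argument of Lemma~\ref{L:MS33}'' because $\Phi$ already has the higher-moment property---but there the reason is that $\Phi$ lifts a unital $^*$-homomorphism. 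Your argument shows this holds for \emph{any} uniformly tracially large order zero map, which renders the diagonal construction redundant in general.
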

\begin{proof}
We need to produce a cpc order zero map $\Psi:M_k\rightarrow A_\infty\cap A'$ such that $\tau(\Psi^m(1_k))=1$ for each $m\in\N$ and $\tau\in T_\infty(A)$. Given such a map, fix a minimal projection $e\in M_k$ and take a lifting $(\psi_n)_{n=1}^\infty$ of $\Psi$ to a sequence of cpc order zero maps from $M_k$ to $A$. We can then set $c_n=\psi_n(e)$, so that $(c_n)_{n=1}^\infty$ is a central sequence. For each $m\in\N$ we have 
$$
\lim_{n\rightarrow\infty}\min_{\tau\in T(A)}\tau(\psi_n^m(1_k))=1
$$
by Lemma \ref{L:TI}. For each $m,n\in\N$ and $\tau\in T(A)$, the map $\tau(\psi_n^m(\cdot))$ is a trace on $M_k$ (\cite[Corollary 4.4]{WZ:MJM}), so $\tau(c_n^m)=\tau(\psi_n^m(1_k))/k$. Hence (\ref{L:MS33:1}) holds.

To construct $\Psi$, fix a uniformly tracially large cpc order zero map $\Phi:M_k\rightarrow A_\infty\cap A'$.  Then, for each $m\in\N$, the map $\Phi^{1/m}:M_k\rightarrow A_\infty\cap A'$ is a cpc order zero map. Lift each $\Phi^{1/m}$ to a sequence $(\phi^{(m)}_n)_{n=1}^\infty$ of cpc order zero maps $M_k\rightarrow A$. Fix a dense sequence $(x_r)_{r=1}^\infty$ in $A$ and for each $s\in\N$, we can find $r_s$ sufficiently large such that:
\begin{itemize}
\item $\|[\phi^{(s)}_{r_s}(y),x_i]\|\leq \frac{1}{s}\|y\|$, for all $y\in M_k$ and $i\in\{1,\dots,s\}$;
\item $\tau(\phi^{(s)}_{r_s}(1_k)^s)\geq 1-\frac{1}{s}$, for all $\tau\in T(A)$.
\end{itemize}
To obtain the second condition, note that $((\phi^{(s)}_n)^s)_{n=1}^\infty$ is a lifting of $\Phi$ and apply Lemma \ref{L:TI}.  The order zero map $\Psi:M_k\rightarrow A_\infty\cap A'$ induced by $(\phi^{(s)}_{r_s})_{s=1}^\infty$ has $\tau(\Psi^m(1_k))=1$ for all $m\in\N$ and $\tau\in T_\infty(A)$, as required. 
\end{proof}

The main result (Theorem 1.1) of \cite{MS:Acta} shows that for a simple separable unital nuclear nonelementary $C^*$-algebra $A$ with finitely many extremal traces and $T(A)\neq\emptyset$, the following properties are equivalent:
\begin{enumerate}[(i)]
\item $A$ is $\Z$-stable;
\item $A$ has strict comparison;
\item every completely positive map from $A$  to $A$ can be excised in small central sequences (see \cite[Definition 2.1]{MS:Acta} for the definition of this concept);
\item $A$ has  property (SI) as defined in \cite[Definition 3.3]{S:JFA} (see \cite[Definition 4.1]{MS:CMP} for the equivalent formulation used in \cite{MS:Acta}).
\end{enumerate}
The implication (i)$\implies$(ii) is due to R\o{}rdam \cite{R:IJM} and holds only assuming that $A$ is unital, separable, simple and exact. The implication (iii)$\implies$(iv) is immediate from the definitions. The proof of the remaining implications (ii)$\implies$(iii) and (iv)$\implies$(i) is valid for any unital simple separable nuclear $C^*$-algebra with $T(A)\neq\emptyset$ for which the conclusion of \cite[Lemma 3.3]{MS:Acta} holds as this is the only place where the extremal trace hypothesis enters play. For the implication (ii)$\implies$(iii) this is set out explicitly in the proof of \cite[Theorem 4.2]{MS:Acta}, and the proof of (iv)$\implies$(i) is readily seen to be a direct argument from property (SI) and the conclusion of \cite[Lemma 3.3]{MS:Acta}.  Using Lemma \ref{L:MS33}, we can formulate this result as follows.

\begin{theorem}[Matui-Sato]\label{MS}
Let $A$ be a simple separable unital nuclear  $C^*$-algebra with strict comparison. Suppose that for each $k\geq 2$, $A$ admits uniformly tracially large cpc order zero maps $M_k\rightarrow A_\infty\cap A'$.  Then $A$ is $\Z$-stable.
\end{theorem}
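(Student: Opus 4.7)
The plan is to deduce the theorem directly from the Matui--Sato results surveyed in the discussion preceding the statement, using Lemma \ref{L:MS33} as the bridge. The key observation, already isolated in the excerpt, is that the extremal trace hypothesis in \cite{MS:Acta} is used \emph{solely} through \cite[Lemma 3.3]{MS:Acta}: the conclusion of that lemma, once available, feeds into the implications (ii)$\implies$(iii) and (iv)$\implies$(i) of their main theorem without requiring anything further about $T(A)$.

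First, for each $k\geq 2$ I would apply Lemma \ref{L:MS33} to the uniformly tracially large cpc order zero map $\Phi:M_k\to A_\infty\cap A'$ given by the hypothesis. This yields a cpc order zero map $\Psi:M_k\to A_\infty\cap A'$ and a central sequence $(c_n)_{n=1}^\infty$ of positive contractions in $A$ with
\[
\lim_{n\to\infty}\max_{\tau\in T(A)}|\tau(c_n^m)-1/k|=0
\]
for every $m\in\N$, and with $\Psi(e)=[(c_n)_{n=1}^\infty]$ for some minimal projection $e\in M_k$. This is verbatim the conclusion of \cite[Lemma 3.3]{MS:Acta}.

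Second, I would invoke the two Matui--Sato implications that only need this conclusion, together with the hypothesis of strict comparison and the standing assumptions (simple, separable, unital, nuclear). Strict comparison combined with the conclusion of \cite[Lemma 3.3]{MS:Acta} yields, via the proof of \cite[Theorem 4.2]{MS:Acta}, that every cp map $A\to A$ can be excised in small central sequences (condition (iii) in the excerpt's list). Property (SI) (condition (iv)) is immediate from excision. Finally, (iv)$\implies$(i) in \cite{MS:Acta} is a direct argument from property (SI) together with the conclusion of \cite[Lemma 3.3]{MS:Acta}, and so again goes through verbatim for $A$. This produces $\Z$-stability.

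There is no genuine technical obstacle at this point, since the work has been packaged into Lemma \ref{L:MS33} and the paragraph preceding Theorem \ref{MS} has already audited \cite{MS:Acta} to confirm that nothing besides the conclusion of \cite[Lemma 3.3]{MS:Acta} is affected by relaxing the finite-extremal-trace assumption. The only diligence required is to match Lemma \ref{L:MS33}'s output with the statement of \cite[Lemma 3.3]{MS:Acta} and to cite the appropriate portions of \cite{MS:Acta} for the two implications used; both are straightforward bookkeeping rather than new mathematics.
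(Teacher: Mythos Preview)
Your proposal is correct and follows exactly the argument the paper gives: the theorem is not proved in a separate proof environment but is presented as an immediate consequence of the preceding paragraph, which audits \cite{MS:Acta} to show that the implications (ii)$\implies$(iii)$\implies$(iv)$\implies$(i) go through for any simple separable unital nuclear $C^*$-algebra once the conclusion of \cite[Lemma 3.3]{MS:Acta} is available, and then invokes Lemma \ref{L:MS33} to supply that conclusion from the uniformly tracially large hypothesis. Your write-up is essentially a fleshed-out version of that same reasoning.
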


We now turn to amenability and Matui-Sato's construction of uniformly tracially large order zero maps for simple separable unital $C^*$-algebras with finitely many extremal traces. Recall that in \cite{H:Invent}, Haagerup showed that nuclear $C^*$-algebras are amenable in the sense of \cite{J:MAMS}. Moreover \cite[Theorem 3.1]{H:Invent} gives additional information on the location of a virtual diagonal witnessing amenability (\cite[Theorem 3.1]{H:Invent}). Combining this with Johnson's Hahn-Banach argument for extraction of an approximate diagonal from a virtual diagonal (\cite[Lemma 1.2]{J:AJM}) gives Lemma \ref{L.Amenable}, which is used in the proof of Sato's lemma below, as well as the construction of central sequences of positive elements with specified tracial behaviour in Section \ref{Sect3}.

\begin{lemma}[Haagerup]\label{L.Amenable}
Let $A$ be a unital nuclear $C^*$-algebra. Then for any finite subset $\F$ of $A$ and $\eta>0$, there exists $r\in\N$, contractions $a_1,\dots,a_r\in A$ and positive reals $\lambda_1,\dots,\lambda_r$ with $\sum_{i=1}^r\lambda_i=1$ such that:
\begin{enumerate}
\item\label{L.Amenable1} $\|\sum_{i=1}^r\lambda_i a_ia_i^*-1\|<\eta$;
\item\label{L.Amenable2} $\|\sum_{i=1}^r\lambda_i (xa_i\otimes a_i^*-a_i\otimes a_i^*x)\|_{A\potimes A}<\eta$ for all $x\in\F$,
\end{enumerate}
where $A\potimes A$ is the projective tensor product.
\end{lemma}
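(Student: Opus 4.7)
The plan is to combine Haagerup's result on the form of the virtual diagonal of a nuclear $C^*$-algebra with Johnson's standard Hahn-Banach argument for passing from a virtual diagonal to an approximate diagonal.

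First I would recall the operator-algebraic formulation of amenability: $A$ is amenable precisely when there exists a virtual diagonal $M\in(A\potimes A)^{**}$ satisfying $a\cdot M=M\cdot a$ for every $a\in A$ and $\pi^{**}(M)=1$, where $\pi:A\potimes A\to A$ denotes the multiplication map. The content of \cite[Theorem 3.1]{H:Invent} which is being invoked is that when $A$ is nuclear one may choose $M$ to be a weak-$^*$ cluster point of elements of the convex set
\[
\mathcal{C}:=\left\{\sum_{i=1}^r\lambda_i\, a_i\otimes a_i^*\;:\;r\in\N,\;a_i\in A\text{ contractions},\;\lambda_i\geq0,\;\sum_i\lambda_i=1\right\}\subseteq A\potimes A.
\]
This is the input that already restricts the shape of the tensors to be used.

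Second, I would package the two approximation conditions into a single linear map. Set
\[
T:A\potimes A\longrightarrow\bigoplus_{x\in\F}(A\potimes A)\oplus A,\qquad T(\xi)=\bigl((x\xi-\xi x)_{x\in\F},\pi(\xi)\bigr).
\]
Then $T^{**}(M)=(0,1_A)$ by the two defining properties of the virtual diagonal. Since $M$ lies in the weak-$^*$ closure of $\mathcal{C}$ in $(A\potimes A)^{**}$ and $T^{**}$ is weak-$^*$ to weak-$^*$ continuous, the point $(0,1_A)$ lies in the weak-$^*$ closure of $T(\mathcal{C})$ regarded inside the bidual of the target space.

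Third, I would apply Johnson's convexity observation from \cite[Lemma 1.2]{J:AJM}. Since $T$ is linear and $\mathcal{C}$ is convex, $T(\mathcal{C})$ is a convex subset of the target Banach space. A convex subset of a Banach space has the same weak and norm closure, and its weak closure in the bidual intersected with the space itself agrees with its weak closure in the space. Consequently $(0,1_A)$ lies in the norm closure of $T(\mathcal{C})$, and so we can find $\sum_{i=1}^r\lambda_i a_i\otimes a_i^*\in\mathcal{C}$ with
\[
\Bigl\|\sum_i\lambda_i\bigl(xa_i\otimes a_i^*-a_i\otimes a_i^*x\bigr)\Bigr\|_{A\potimes A}<\eta\quad(x\in\F)\qquad\text{and}\qquad\Bigl\|\sum_i\lambda_ia_ia_i^*-1\Bigr\|_A<\eta,
\]
which are exactly conditions (2) and (1).

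The main obstacle is entirely encapsulated in step one: obtaining, from nuclearity, a virtual diagonal of the restricted form $\sum\lambda_ia_i\otimes a_i^*$ with $a_i$ contractions. This is precisely where \cite[Theorem 3.1]{H:Invent} is used; once the cluster-point description is in hand, steps two and three are purely soft functional-analytic manipulation, with no additional input specific to $A$ required.
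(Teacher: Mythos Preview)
Your proposal is correct and follows precisely the approach the paper indicates: the paper does not give a detailed proof of this lemma but states just before it that the result follows by ``combining [Haagerup's \cite[Theorem 3.1]{H:Invent}] with Johnson's Hahn--Banach argument for extraction of an approximate diagonal from a virtual diagonal (\cite[Lemma 1.2]{J:AJM}).'' Your three steps are exactly this combination, with the convexity of $\mathcal{C}$ and the Mazur/Hahn--Banach passage from weak-$^*$ closure to norm closure supplying the details of Johnson's argument.
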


Let $N\subset\BHs$ be a von Neumann algebra acting on a separable Hilbert space $\Hs$.  We define $N^\infty$ to be the quotient $C^*$-algebra $\ell^\infty(N)/J$, where $J$ denotes the norm-closed two sided ideal of all strong$^*$-null sequences in $\ell^\infty(N)$.  Just as in the norm-closed setting, we can embed $N$ as a subalgebra of constant sequences in $N^\infty$ and so obtain the strong$^*$-central sequence algebra $N^\infty\cap N'$. With this notation we can now state the following result, which has been generalised to the nonnuclear setting in \cite{KR:InPrep}. 

\begin{lemma}[Sato, {\cite[Lemma 2.1]{S:arXiv}}]\label{L:Sato}Let $A$ be a  separable unital  nuclear  $C^*$-algebra. Suppose that $A\subset\BHs$ is a faithful unital representation of $A$ on a separable Hilbert space and write $N=A''$. Then the natural $^*$-homomorphism
$$
A_\infty\cap A'\rightarrow N^\infty\cap N'
$$
is surjective.  
\end{lemma}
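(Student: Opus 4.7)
The plan is to take a bounded representative $(y_n)_n$ in $\ell^\infty(N)$ of $y \in N^\infty \cap N'$ and produce a bounded sequence $(x_n)_n$ in $A$ whose class lies in $A_\infty\cap A'$ and satisfies $(y_n-x_n)\in J$. The idea is to replace each $y_n$ by a Kaplansky-density approximant $b_n\in A$ and then average this against an approximate diagonal provided by Lemma \ref{L.Amenable}; the averaging will enforce norm-central commutation with $A$ while leaving the strong$^*$-class unchanged.

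Concretely, I would fix countable norm-dense subsets $\{d_k\}_{k=1}^\infty$ of the unit ball of $A$ and $\{\xi_k\}_{k=1}^\infty$ of the unit ball of $\Hs$. For each $m\in\N$, apply Lemma \ref{L.Amenable} with $\F=\{d_1,\dots,d_m\}$ and $\eta=1/m$ to obtain contractions $a_1^{(m)},\dots,a_{r_m}^{(m)}\in A$ and positive weights $\lambda_i^{(m)}$ summing to one. Since for each fixed $m,i,k$ the quantities $\|[a_i^{(m)},y_n]\xi_k\|$ and $\|[a_i^{(m)},y_n]^*\xi_k\|$ tend to zero as $n\to\infty$ (because $[(y_n)_n]$ commutes with the fixed element $a_i^{(m)}\in N$), a diagonal argument yields a sequence $m(n)\to\infty$ such that, for each $n$, these quantities, together with their analogues at the vectors $(a_i^{(m(n))})^*\xi_k$ and $a_i^{(m(n))}\xi_k$, are all at most $1/m(n)$ for $i\le r_{m(n)}$ and $k\le m(n)$. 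Then use Kaplansky density to choose $b_n\in A$ with $\|b_n\|\le\|y_n\|$ approximating $y_n$ in strong$^*$ to within $1/m(n)$ on this same finite list of vectors, and set
\[
x_n:=\sum_{i=1}^{r_{m(n)}}\lambda_i^{(m(n))}\,a_i^{(m(n))}\,b_n\,(a_i^{(m(n))})^*\in A.
\]

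For norm-centrality of $(x_n)_n$: given $d_j$ and $n$ with $j\le m(n)$, evaluate the approximate commutation (\ref{L.Amenable2}) at $x=d_j$ under the bounded bilinear map $A\potimes A\to A$, $a\otimes c\mapsto a b_n c$ of norm at most $\|y\|$. This gives $\|[x_n,d_j]\|\le\|y\|/m(n)\to 0$, so $[(x_n)_n]\in A_\infty\cap A'$. For strong$^*$-equality modulo $J$, decompose
\[
x_n - y_n = \sum_i\lambda_i^{(m(n))}a_i^{(m(n))}(b_n-y_n)(a_i^{(m(n))})^* + \sum_i\lambda_i^{(m(n))}[a_i^{(m(n))},y_n](a_i^{(m(n))})^* + \Big(\sum_i\lambda_i^{(m(n))}y_n a_i^{(m(n))}(a_i^{(m(n))})^* - y_n\Big).
\]
Evaluated at $\xi_k$ with $k\le m(n)$, the first sum has norm at most $1/m(n)$ by the Kaplansky choice of $b_n$ applied to the vectors $(a_i^{(m(n))})^*\xi_k$, the second at most $1/m(n)$ by the diagonalisation, and the third at most $\|y\|/m(n)$ by property (\ref{L.Amenable1}). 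The adjoint estimate is entirely symmetric.

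The main obstacle is the diagonalisation step: the contractions $a_i^{(m)}$ furnished by Lemma \ref{L.Amenable} depend on $m$, yet the strong$^*$-centrality of $(y_n)_n$ modulo $J$ is only available element-by-element in $N$. Letting $m(n)$ grow slowly enough in $n$ converts this family of per-direction strong$^*$-estimates into the joint control needed for the averaged sequence $(x_n)_n$ to descend correctly to $y$.
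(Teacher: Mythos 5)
Your argument is correct, and it is essentially the proof the paper invokes by citation: the paper gives no proof of Lemma \ref{L:Sato} itself but points to Sato's \cite[Lemma 2.1]{S:arXiv} and explicitly notes that Haagerup's approximate diagonal (Lemma \ref{L.Amenable}) is the ingredient used there, which is exactly your averaging-plus-Kaplansky-density scheme with the slow diagonalisation $m(n)\to\infty$. No genuine gap.
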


Sato's lemma is the key ingredient in \cite[Lemma 3.3]{MS:Acta}, which obtains uniformly tracially large order zero maps when $A$ is separable, simple, unital and nuclear with finitely many extremal traces.  For use in Section \ref{Sect3}, we show how to deduce this from the previous lemma using projectivity of order zero maps in the context of a fixed extremal trace.  When $A$ has only finitely many extremal traces, a similar argument using the trace obtained from averaging the extremal traces can be used to prove \cite[Lemma 3.3]{MS:Acta}.  Recall that a II$_1$ factor $N$ is said to be \emph{McDuff} if it absorbs the hyperfinite II$_1$ factor $R$ tensorially, i.e. $N\cong N\,\overline{\otimes}\,R$. Every McDuff factor $N$ has an abundance of centralising sequences: for each $k\geq 2$, factorise $N\cong N\,\overline{\otimes}\,R$ and by regarding $R$ as the weak closure of the UHF-algebra $M_{k^\infty}$ we can consider the sequence of $n$-th tensor factor embeddings $M_k\rightarrow M_{k^\infty}$ to obtain a unital embedding $M_k\rightarrow N^\infty\cap N'$.

\begin{lemma}[cf. {\cite[Lemma 3.3]{MS:Acta}}]\label{L:OT}Let $A$ be a simple separable unital nuclear nonelementary $C^*$-algebra and let $\tau$ be an extremal tracial state on $A$.  For $k\geq 2$, there exists a cpc order zero map $\Phi:M_k\rightarrow A_\infty\cap A'$ with $\tau_\omega(\Phi(1_k))=1$ for all $\omega\in \BN\setminus\N$.
\end{lemma}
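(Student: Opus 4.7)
The plan is to produce $\Phi$ as a cpc order zero lift of a canonical McDuff embedding $\iota:M_k\to N^\infty\cap N'$ (for $N=\pi_\tau(A)''$) along the surjection from Sato's Lemma \ref{L:Sato}, and then to verify the trace condition by translating the kernel of the quotient $A_\infty\to N^\infty$ into $2$-norm convergence with respect to $\tau$.

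First I would identify $N:=\pi_\tau(A)''$ with the hyperfinite $\mathrm{II}_1$ factor $R$. Extremality of $\tau$ makes $N$ a factor; simplicity of $A$ makes $\pi_\tau$ faithful, so if $N$ were finite dimensional then $A$ would embed into a matrix algebra and itself be a finite dimensional simple $C^*$-algebra, contradicting nonelementarity. Hence $N$ is an infinite dimensional factor carrying a faithful finite trace, i.e.\ a $\mathrm{II}_1$ factor, and nuclearity of $A$ passes to injectivity of $N$, so by Connes' theorem $N\cong R$. McDuffness of $R$ then supplies a unital $^*$-homomorphism (in particular, a unital cpc order zero map) $\iota:M_k\to N^\infty\cap N'$ in the manner recalled just before the statement.

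Sato's Lemma \ref{L:Sato} now provides a surjection $q:A_\infty\cap A'\twoheadrightarrow N^\infty\cap N'$, and since $M_k$ is finite dimensional Lemma \ref{L:Proj} lifts $\iota$ to a cpc order zero map $\Phi:M_k\to A_\infty\cap A'$ with $q\circ\Phi=\iota$. Choose a lifting $(\phi_n)_{n=1}^\infty$ of $\Phi$ to a sequence of cpc order zero maps $\phi_n:M_k\to A$. By Remark \ref{R:OT} the desired condition $\tau_\omega(\Phi(1_k))=1$ for every $\omega\in\BN\setminus\N$ is equivalent to $\tau(\phi_n(1_k))\to 1$. The map $q$ is the restriction to $A_\infty\cap A'$ of the natural quotient $A_\infty\to N^\infty$ induced by $\pi_\tau$, so unitality of $\iota=q\circ\Phi$ means precisely that the bounded sequence $(\pi_\tau(\phi_n(1_k))-1_N)_{n}$ is strong$^*$-null in $\ell^\infty(N)$. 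Since $\tau$ extends to a faithful normal tracial state on $N$, the strong$^*$-topology on bounded subsets of $N$ coincides with the $2$-norm topology induced by this trace; this yields $\tau((\phi_n(1_k)-1)^*(\phi_n(1_k)-1))\to 0$, and Cauchy-Schwarz then gives $\tau(\phi_n(1_k))\to 1$.

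The only step with any subtlety is the final trace calculation, where one must be careful that Sato's surjection is compatible with the tracial data on both sides so that the kernel of the quotient $A_\infty\to N^\infty$ is genuinely annihilated by $\tau_\omega$; once this compatibility is observed the argument is pure assembly of Sato's lemma, the projectivity lemma, and Connes' theorem.
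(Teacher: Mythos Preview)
Your proof is correct and takes essentially the same approach as the paper's: lift a unital embedding $M_k\to N^\infty\cap N'$ (obtained from McDuffness of the injective $\mathrm{II}_1$ factor $N=\pi_\tau(A)''$) via Sato's surjection and projectivity of order zero maps, then verify the trace condition. The only cosmetic difference is in this last step---the paper observes directly that $\tau_\omega$ descends to a well-defined trace on $N^\infty$, so that $\tau_\omega(\Phi(1_k))=\tau_\omega(\iota(1_k))=1$, whereas you unpack the same content through Remark~\ref{R:OT} and $2$-norm convergence.
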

\begin{proof}
Fix $k\geq 2$. Let $\pi_\tau$ denote the GNS-representation associated to $\tau$.  As $\tau$ is an extremal trace, and $A$ is nuclear it follows that $\pi_\tau(A)''=N$ is an injective II$_1$ factor. By Connes' theorem \cite[Theorem 5.1]{C:Ann}, $N$ is McDuff.  As such, there is a unital embedding $\iota:M_k\hookrightarrow N^\infty\cap N'$.  By Lemma \ref{L:Sato} and the projectivity of order zero maps (Lemma \ref{L:Proj}), there exists an order zero map $\Phi:M_k\rightarrow A_\infty\cap A'$ lifting $\iota$.  For each $\omega\in\BN\setminus\N$, the trace $\tau_\omega$ given by $\tau_\omega((x_n)_{n=1}^\infty)=\lim_{n\rightarrow\omega}\tau(x_n)$ is well defined on $N^\infty$ and has $\tau_\omega(\iota(1_k))=1$.  Hence $\tau_\omega(\Phi(1_k))=\tau_\omega(\iota(1_k))=1$.
\end{proof}
\begin{remark}
Note that the map $\Phi$ of Lemma \ref{L:OT} already has $\tau_\omega(\Phi^m(1_k))=1$ for all $m\in\N$ and $\omega\in\BN\setminus\N$. We do not have to run the argument of Lemma \ref{L:MS33} to obtain this here.
\end{remark}

\section{Approximately central functions on the trace space}\label{Sect3}

\noindent
Recall that the tracial state space $T(A)$ of a separable unital $C^*$-algebra is a compact (in the weak$^*$-topology) convex subset of the state space of $A$, and so the Krein-Milman theorem shows that $T(A)$ is the closed convex hull of its extreme points $\partial_eT(A)$.  Further, $T(A)$ forms a metrisable Choquet simplex: every point of $T(A)$ is the barycentre of a unique measure supported on $\partial_eT(A)$, \cite{A:Book}. If additionally $\partial_eT(A)$ is compact, then $T(A)$ is known as a Bauer simplex. In this case we have a natural identification of $\Aff(T(A))=\{f:T(A)\rightarrow \mathbb R \mid f\text{ is continuous and affine}\}$ with $C_{\mathbb R}(\partial_eT(A))$ given by restriction (see \cite{G:Book}). Our objective in this section is Lemma \ref{L:Key} which enables us to produce a finite collection of cpc order zero maps with large sum when $T(A)$ has compact extreme boundary of finite covering dimension.  

The covering dimension of a compact Hausdorff space $X$ can be defined in a number of equivalent fashions (see \cite{P:Book}). We use the colouring formulation as follows. For $m\in\{0,1,\dots,\}$, say that $\dim X\leq m$ if and only if every finite open cover $\U$ of $X$ admits an $(m+1)$-colourable refinement $\V$: that is $\V$ is an open cover of $X$ with the property that every $V\in\V$ is contained in some element of $\U$ (i.e. $\V$ refines $\U$) and there exists a function $c:\V\rightarrow\{0,1,\ldots,m\}$ such that if $V,V'\in\V$ have $c(V)=c(V')$, then $V\cap V'=\emptyset$ (i.e. $\V$ can be $(m+1)$-coloured, in that each element of $\V$ can be assigned a colour such that two sets of the same colour are disjoint). We need a slight strengthening so that the sets in $\V$ form a closed cover of $X$. This is well known, but we include a proof for completeness. 

\begin{lemma}\label{L:CD}
Let $X$ be a compact Hausdorff topological space with $\dim(X)\leq m$.  Then for each finite open cover $\U$ of $X$, there exists a finite cover $\V$ consisting of closed sets refining $\U$ such that there is an $(m+1)$-colouring $c:\V\rightarrow\{0,1,\dots,m\}$ of $\V$ with the property that if $V,V'\in\V$ have $c(V)=c(V')$, then $V\cap V'=\emptyset$.
\end{lemma}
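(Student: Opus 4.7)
The plan is to start with the colouring definition of covering dimension applied to $\mathcal{U}$, shrink the resulting open refinement to a cover by closed sets using the shrinking lemma for normal spaces, and then check that disjointness of same-coloured open pieces passes to their closed shrinkings.

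More precisely, since $\dim(X) \leq m$, I first apply the given definition to $\mathcal{U}$ to obtain a finite open cover $\mathcal{W} = \{W_1, \ldots, W_n\}$ of $X$ that refines $\mathcal{U}$ together with a colouring $c : \mathcal{W} \to \{0, 1, \ldots, m\}$ such that $W_i \cap W_j = \emptyset$ whenever $c(W_i) = c(W_j)$ and $i \ne j$. Next, since $X$ is compact Hausdorff, hence normal, I invoke the shrinking lemma for finite open covers of normal spaces to obtain open sets $W'_1, \ldots, W'_n$ with $\overline{W'_i} \subset W_i$ for each $i$ and $\bigcup_i W'_i = X$. Set $V_i := \overline{W'_i}$ and $\mathcal{V} := \{V_1, \ldots, V_n\}$; then $\mathcal{V}$ is a finite cover of $X$ by closed sets.

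It remains to verify the refinement and colouring properties. For refinement, $V_i = \overline{W'_i} \subset W_i$, and $W_i$ is contained in some element of $\mathcal{U}$ by construction of $\mathcal{W}$, so $\mathcal{V}$ refines $\mathcal{U}$. For the colouring, define $c'(V_i) := c(W_i)$. If $c'(V_i) = c'(V_j)$ with $i \ne j$, then $c(W_i) = c(W_j)$, so $W_i \cap W_j = \emptyset$. Since $V_i \subset W_i$, we have $V_i \cap W_j \subset W_i \cap W_j = \emptyset$, and then $V_i \cap V_j \subset V_i \cap W_j = \emptyset$, as $V_j \subset W_j$. This gives the required $(m+1)$-colouring of the closed refinement.

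I do not expect any genuine obstacle: the only nontrivial ingredient is the shrinking lemma, which is a standard consequence of normality of compact Hausdorff spaces. The small point to keep in mind is that the sets $W_i$ (indexed by $i$) may repeat as sets within $\mathcal{W}$, so I am implicitly working with an indexed family rather than a set; but this causes no difficulty since colouring is defined on the indexing set.
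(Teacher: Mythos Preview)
Your proof is correct and follows essentially the same approach as the paper: take an $(m+1)$-colourable open refinement and shrink it to a closed cover inside the open sets, so that the colouring is inherited. The only cosmetic difference is that the paper carries out the shrinking via the supports of a partition of unity subordinate to the open refinement, whereas you invoke the shrinking lemma directly; these are equivalent devices in a compact Hausdorff space.
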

\begin{proof}
Given a finite open cover $\U$ of $X$, we can find an open cover $\tilde{\V}$ refining $\U$ which is $(m+1)$ colourable. Construct a partition of unity $(f_V)_{V\in\tilde{V}}$ subordinate to $\tilde{\V}$, i.e. $0\leq f_V\leq 1$ for all $V$, $\sum_{V\in \tilde{\V}}f_V(x)=1$ for all $x\in X$ and the support of each $f_V$ is contained in $V$. Let $\V$ be the collection of the supports of the $f_V$. This consists of closed sets, and refines $\tilde{\V}$ so is $(m+1)$-colourable. As every point $x\in X$ lies in the support of some $f_V$ it follows that $\V$ covers $X$.
\end{proof}

The following lemma of Lin (\cite{L:JFA}, based on  work of Cuntz and Pedersen \cite{CP:JFA}) enables us to realise strictly positive elements of $\Aff(T(A))$ via positive elements of $A$.

\begin{lemma}[Lin {\cite[Theorem 9.3]{L:JFA}}, following Cuntz, Pedersen {\cite{CP:JFA}}]\label{T.Lin}
Let $A$ be a simple  unital  $C^*$-algebra with non-empty tracial state space and let $f\in\Aff(T(A))$ be strictly positive. Then for any $\eps>0$, there exists $x\in A_+$ with $f(\tau)=\tau(x)$ for all $\tau\in T(A)$ and $\|x\|\leq\|f\|+\eps$.
\end{lemma}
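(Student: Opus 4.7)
The plan is to derive the lemma from the density theorem of Cuntz and Pedersen \cite{CP:JFA}, which asserts that for a simple unital $C^*$-algebra the cone $\{\hat{x}:x\in A_+\}$, where $\hat{x}(\tau):=\tau(x)$, is uniformly dense in $\Aff(T(A))_+$. The strategy is to upgrade this density to an exact realisation of a strictly positive $f$, with the prescribed norm bound, through a telescoping series.

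After rescaling I may assume $\|f\|_\infty=1$; strict positivity together with weak$^*$-compactness of $T(A)$ gives $\delta>0$ with $f(\tau)\ge\delta$ for every $\tau$. I would choose a summable sequence $0<\eta_n\searrow 0$ with $\eta_1<\delta$, $\eta_{n+1}<\eta_n/2$ and $\sum_n\eta_n<\eps$, and then inductively build positive elements $a_n\in A_+$ and strictly positive affine remainders $r_n\in\Aff(T(A))$ with $r_1=f$, $r_{n+1}=r_n-\hat{a_n}$, $3\eta_n/4\le r_{n+1}\le 5\eta_n/4$, and $\|a_n\|\le\|r_n\|_\infty+\eta_n$. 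At each stage $a_n$ is produced by first applying Cuntz--Pedersen to the positive affine function $r_n-\eta_n$ (well-defined thanks to $\min r_n>\eta_n$, maintained by the induction) to obtain an approximant $b_n\in A_+$ within $\eta_n/4$ of the target, and then using continuous functional calculus to truncate $b_n$ so that its operator norm respects the desired bound without deteriorating the trace approximation too much.

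Assuming the induction closes, $\|r_n\|_\infty$ decays geometrically, so $\sum_n a_n$ converges absolutely in $A_+$ to some $x$ with $\hat{x}=f$ by telescoping, and $\|x\|\le\sum_n\|a_n\|\le\|f\|_\infty+\eps$. The main obstacle lies precisely in the truncation step: after cutting $b_n$ down to norm $\|r_n\|_\infty+\eta_n$, the trace of the truncated element differs from $\hat{b_n}(\tau)$ by $\tau\bigl((b_n-\|r_n\|_\infty-\eta_n)_+\bigr)$, and this overflow is not automatically small uniformly in $\tau$, since $b_n$ may have a small spectral projection of large norm. To circumvent this I would either refine the Cuntz--Pedersen step with a built-in norm bound --- exploiting that $A$ is simple unital so that positive elements can be replaced by smaller ones with controlled trace within a given hereditary subalgebra --- or feed the truncation loss back into the next iterate by enlarging $\eta_{n+1}$ at the expense of tightening the initial tolerances. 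Verifying that this adjustment remains compatible with the geometric decay is the technical heart of the argument, and is the point at which Lin's refinement goes beyond the original Cuntz--Pedersen density statement.
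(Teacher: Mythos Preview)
The paper does not prove this lemma: it is quoted from \cite[Theorem~9.3]{L:JFA} (with antecedents in \cite{CP:JFA}) and used without argument as an input to Lemma~\ref{L:CT}. There is therefore no proof in the present paper to compare your sketch against.

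As for the sketch itself, you have correctly located the difficulty, and it is a genuine one. Cuntz--Pedersen density gives no norm control on $b_n$, and replacing $b_n$ by its cut-off at level $\|r_n\|+\eta_n$ can cost trace of order $\tau(b_n)$ rather than $O(\eta_n)$: take for instance $b_n$ a projection when the truncation level lies strictly below $1$. Neither of your proposed remedies is yet an argument. A ``Cuntz--Pedersen step with built-in norm bound'' is essentially a restatement of what you are trying to prove, and feeding the loss forward into $\eta_{n+1}$ while preserving geometric decay requires precisely the $O(\eta_n)$ bound on the loss that fails in general. A route that does work passes through the stronger Cuntz--Pedersen conclusion that, for simple $A$, the quotient of $A^{sa}$ by the traceless selfadjoints is isometric to $\Aff(T(A))$; this already yields \emph{selfadjoint} realisers of $f$ with norm at most $\|f\|+\eps$, and the remaining passage from selfadjoint to positive is the substantive step that Lin carries out in the cited reference. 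If you want a self-contained proof rather than a citation, that is where the work lies.
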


Given any positive contraction $e$ in a nuclear $C^*$-algebra $A$ we can apply Haagerup's approximate diagonal to $e$ to produce a central sequence of positive contractions which has the same tracial behaviour as $e$. In particular, we can witness strictly positive elements of $\Aff(T(A))$ via central sequences of positive contractions.

\begin{lemma}\label{L:CT}
Let $A$ be a simple separable unital  nuclear $C^*$-algebra with a non-empty trace space and let $f$ be a positive affine continuous function on $T(A)$ with $\|f\|\leq 1$. Then there exists $(e_n)\in A_\infty\cap A'$  consisting of positive contractions in $A$ with
\begin{equation}\label{L:CT:E1}
\lim_{n\rightarrow\infty}\sup_{\tau\in T(A)}|\tau(e_n)-f(\tau)|=0.
\end{equation}
\end{lemma}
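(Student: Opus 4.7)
The strategy is to realise $f$ in trace by a positive contraction $x\in A_+$ using Lin's theorem, and then to \emph{centralise} $x$ by averaging against an approximate diagonal from Haagerup's theorem (Lemma \ref{L.Amenable}). First, a preliminary reduction: replacing $f$ by the strictly positive perturbation $f_\delta=(1-\delta)f+\delta/2$ yields $\|f_\delta\|\leq 1-\delta/2<1$ and $\|f-f_\delta\|_\infty\leq\delta/2$, so a diagonal argument in $\delta\to 0$ reduces the problem to the case where $f$ is strictly positive with $\|f\|<1$. Lemma \ref{T.Lin} then provides $x\in A_+$ with $\tau(x)=f(\tau)$ for all $\tau\in T(A)$ and $\|x\|<1$.

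For the main construction, given a finite subset $\F\subset A$ and $\eta>0$, I would apply Lemma \ref{L.Amenable} to obtain contractions $a_1,\dots,a_r\in A$ and positive weights $\lambda_1,\dots,\lambda_r$ summing to $1$, and set
$$
e_{\F,\eta}=\sum_{i=1}^r\lambda_ia_ixa_i^*\in A_+.
$$
Since $e_{\F,\eta}\leq\|x\|\sum_i\lambda_ia_ia_i^*$ and $\|\sum_i\lambda_ia_ia_i^*-1\|<\eta$, one gets $\|e_{\F,\eta}\|<1$ for $\eta$ sufficiently small, so $e_{\F,\eta}$ is a positive contraction. Approximate centrality follows by a standard argument: the bilinear map $\mu_x:A\potimes A\to A$, $b\otimes c\mapsto bxc$, is bounded of norm at most $\|x\|$, and setting $M=\sum_i\lambda_ia_i\otimes a_i^*$ a direct computation yields $\mu_x(yM-My)=[y,e_{\F,\eta}]$; Lemma \ref{L.Amenable}(2) then gives $\|[y,e_{\F,\eta}]\|\leq\|x\|\eta$ for each $y\in\F$.

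I expect the main obstacle to be the uniform tracial estimate, because Lemma \ref{L.Amenable} controls $\sum_i\lambda_ia_ia_i^*$ in norm while the quantity appearing directly in $\tau(e_{\F,\eta})$ is $b:=\sum_i\lambda_ia_i^*a_i$, which need not be close to $1$ in norm. The resolution is to apply traciality twice. First, $\tau(e_{\F,\eta})=\tau(xb)$, and since each $a_i^*a_i\leq 1$ we have $0\leq b\leq 1$; so positivity of $(1-b)^{1/2}x(1-b)^{1/2}$ gives $0\leq\tau(x)-\tau(e_{\F,\eta})=\tau(x(1-b))\leq\|x\|\tau(1-b)$. Second, traciality yields $\tau(b)=\tau(\sum_i\lambda_ia_ia_i^*)$, and combining this with the norm estimate $|\tau(z)-1|\leq\|z-1\|$ for the state $\tau$ gives $|1-\tau(b)|<\eta$ \emph{uniformly} in $\tau\in T(A)$. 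Therefore $|\tau(e_{\F,\eta})-f(\tau)|\leq\|x\|\eta$ uniformly over $T(A)$. Taking $\F_n$ to exhaust a dense subset of $A$ and $\eta_n\to 0$ produces a central sequence $(e_n)$ of positive contractions with the required tracial behaviour, and the diagonal argument over $\delta\to 0$ then completes the proof.
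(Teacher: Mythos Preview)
Your proof is correct and follows essentially the same route as the paper: realise $f$ by a positive element via Lin's theorem (Lemma~\ref{T.Lin}), then conjugate by a Haagerup approximate diagonal (Lemma~\ref{L.Amenable}) to centralise, and use traciality to transfer the norm control on $\sum_i\lambda_ia_ia_i^*$ to a tracial control on $\sum_i\lambda_ia_i^*a_i$. The only organisational difference is that the paper perturbs $f$ to a sequence $f_n=\tfrac{1}{3n}+(1-\tfrac{2}{3n})f$ and obtains a sequence $x_n$ directly, whereas you reduce first to a single strictly positive $f$ with $\|f\|<1$ and a single $x$, then run a diagonal argument; this is a cosmetic variation, and in fact your reduction has the small advantage of making the contraction estimate $\|e_{\F,\eta}\|\leq\|x\|(1+\eta)<1$ clean.
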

\begin{proof}
Define a sequence $(f_n)_{n=1}^\infty$ of continuous affine strictly positive functions on $T(A)$ by defining 
$$
f_n(\tau)=\frac{1}{3n}+\Big(1-\frac{2}{3n}\Big)f(\tau),
$$
for $\tau\in T(A)$. By construction each $f_n$ is strictly positive and has $\|f_n\|\leq 1-\frac{1}{3n}$ and $|f_n(\tau)-f(\tau)|\leq \frac{1}{n}$ for all $\tau\in T(A)$. For each $n\in\N$, take $\eps=\frac{1}{3n}$ in Lemma \ref{T.Lin} to obtain $x_n\in A_+$ with $\|x_n\|\leq 1$ such that $\tau(x_n)=f_n(\tau)$ for all $\tau\in T(A)$.  By Haagerup's theorem (Lemma \ref{L.Amenable}), we can find an approximate diagonal $(\sum_{i=1}^{l_n}\lambda^{(n)}_ia^{(n)}_i\otimes a^{(n)}_i{}^*)_{n=1}^\infty$ in $A\odot A$ such that each $\|a_i^{(n)}\|\leq 1$ and $\lambda^{(n)}_i$ are positive reals with $\sum_{i=1}^{l_n}\lambda^{(n)}_i=1$ for all $n$ and 
\begin{align}
&\Big\|\sum_{i=1}^{l_n}\lambda^{(n)}_ia_i^{(n)}a_i^{(n)}{}^*-1_A \Big\|\stackrel{n\rightarrow\infty}{\rightarrow}0;\label{L:CT1}\\
&\Big\|\sum_{i=1}^{l_n}\lambda^{(n)}_iba_i^{(n)}\otimes a_i^{(n)}{}^*-\sum_{i=1}^{l_n}\lambda^{(n)}_ia_i^{(n)}\otimes a_i^{(n)}{}^*b\Big\|_{A\potimes A}\stackrel{n\rightarrow\infty}{\rightarrow}0,\quad b\in A.\label{L:CT2}
\end{align}

Define $e_n=\sum_{i=1}^{l_n}\lambda^{(n)}_ia_i^{(n)}x_na^{(n)}_i{}^*$.  These are positive contractions in $A$.  For each $n\in\N$, the map $y\otimes z\mapsto yx_nz$ is contractive with respect to the projective tensor norm, so condition (\ref{L:CT2}) ensures that $(e_n)$ is a central sequence.  For $\tau\in T(A)$, we estimate
\begin{align*}
|\tau(x_n-e_n)|&=\Big|\tau\Big(x_n-\sum_{i=1}^{l_n}\lambda^{(n)}_ia_i^{(n)}x_na^{(n)}_i{}^*\Big)\Big|\\
&=\Big|\tau\Big(\Big(1_A-\sum_{i=1}^{l_n}\lambda^{(n)}_ia^{(n)}_i{}^*a_i^{(n)}\Big)x_n\Big)\Big|\\
&\leq\tau\Big(1_A-\sum_{i=1}^{l_n}\lambda^{(n)}_ia^{(n)}_i{}^*a_i^{(n)}\Big)\|x_n\|\\
&=\tau \Big(1_A-\sum_{i=1}^{l_n}\lambda^{(n)}_ia^{(n)}_ia_i^{(n)}{}^*\Big)\|x_n\|\\
&\leq \Big\|1_A-\sum_{i=1}^{l_n}\lambda^{(n)}_ia^{(n)}_ia_i^{(n)}{}^* \Big\|.
\end{align*}
Then (\ref{L:CT:E1}) follows from this estimate, (\ref{L:CT1}) and the fact that $|f(\tau)-\tau(x_n)|\leq \frac{1}{n}$ for all $\tau\in T(A)$.
\end{proof}

The next lemma enables us to convert central sequences which are tracially orthogonal to norm orthogonal sequences. The argument has its origins in Kishimoto's work \cite{K:JFA}, and our proof is based on \cite[Lemma 3.2]{MS:Acta}.  

\begin{lemma}\label{L:KT}
Let $A$ be a  separable unital $C^*$-algebra with non-empty trace space $T(A)$.  Let $T_0\subset T(A)$ be non-empty and suppose $(e^{(1)}_n)_{n=1}^\infty,\ldots,(e^{(L)}_n)_{n=1}^\infty$ are sequences of positive contractions in $A_+$ representing elements of $A_\infty\cap A'$ such that
\begin{equation}\label{KT:E1}
\lim_{n\rightarrow\infty}\sup_{\tau\in T_0}|\tau(e_n^{(l)}e_n^{(l')})|=0,\quad l\neq l'.
\end{equation}
Then there exist positive elements $\tilde{e}^{(l)}_n\leq e^{(l)}_n$ so that:
\begin{enumerate}[(i)]
\item $(\tilde{e}_n^{(l)})_n$ represents an element of $A_\infty\cap A'$;\label{KT:1}
\item $\lim_{n\rightarrow\infty}\sup_{\tau\in T_0}|\tau(\tilde{e}^{(l)}_n-e_n^{(l)})|=0$ for all $l$;\label{KT:2}
\item $(\tilde{e}_n^{(l)})_n\perp (\tilde{e}_n^{(l')})_n$ in $A_\infty\cap A'$ for $l\neq l'$.\label{KT:3}
\end{enumerate}
\end{lemma}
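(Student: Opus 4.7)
The plan is a Kishimoto-style simultaneous functional-calculus cut-down. For each $l$, set $g_n^{(l)} := \sum_{l'\neq l} e_n^{(l')}$, so that by hypothesis (\ref{KT:E1}) we have $\sup_{\tau\in T_0}\tau(e_n^{(l)}g_n^{(l)})\to 0$. Introduce the continuous function $f_\eps\colon[0,\infty)\to[0,1]$ given by $f_\eps(t):=\min(t/\eps,1)$, which satisfies the two key pointwise inequalities $f_\eps(t)\le t/\eps$ and $t(1-f_\eps(t))^2\le\eps$. Choose a null sequence $\eps_n\downarrow 0$ slowly enough that
\[
\eps_n^{-1}\,\max_{l\neq l'}\sup_{\tau\in T_0}\tau(e_n^{(l)}e_n^{(l')})\;\longrightarrow\;0,
\]
and define
\[
\tilde e_n^{(l)} \;:=\; (e_n^{(l)})^{1/2}\bigl(1 - f_{\eps_n}(g_n^{(l)})\bigr)(e_n^{(l)})^{1/2}.
\]

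Property (\ref{KT:1}) is then automatic: $0\le\tilde e_n^{(l)}\le e_n^{(l)}$ follows from $0\le 1-f_{\eps_n}(g_n^{(l)})\le 1$, and functional calculus applied to central sequences remains central. For (\ref{KT:2}), the inequality $f_{\eps_n}(t)\le t/\eps_n$ gives
\[
\tau\bigl(e_n^{(l)}-\tilde e_n^{(l)}\bigr) \;=\; \tau\bigl(e_n^{(l)}f_{\eps_n}(g_n^{(l)})\bigr) \;\le\; \tfrac{1}{\eps_n}\sum_{l'\neq l}\tau(e_n^{(l)}e_n^{(l')}),
\]
which vanishes uniformly on $T_0$ by the choice of $\eps_n$. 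For orthogonality (\ref{KT:3}), the inequality $e_n^{(l')}\le g_n^{(l)}$ together with $t(1-f_{\eps_n}(t))^2\le\eps_n$ yields
\[
\bigl\|(1-f_{\eps_n}(g_n^{(l)}))\,e_n^{(l')}\bigr\|^2 \;\le\; \bigl\|(1-f_{\eps_n}(g_n^{(l)}))^2\,g_n^{(l)}\bigr\| \;\le\; \eps_n;
\]
commuting the central factor $(e_n^{(l)})^{1/2}$ past $e_n^{(l')}$ modulo a commutator that is null in $A_\infty$ then gives $\|\tilde e_n^{(l)}e_n^{(l')}\|\to 0$ in $A_\infty$, and since $\tilde e_n^{(l')}\le e_n^{(l')}$ a routine $C^*$-inequality of the form $\|x\tilde e_n^{(l')}\|^2\le\|x\|\,\|xe_n^{(l')}\|$ upgrades this to $\|\tilde e_n^{(l)}\tilde e_n^{(l')}\|\to 0$.

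The only genuine subtlety is the balancing act in the choice of $\eps_n$: the tracial approximation (\ref{KT:2}) wants $\eps_n$ large relative to the quantities $\sup_{\tau\in T_0}\tau(e_n^{(l)}e_n^{(l')})$, while the norm-orthogonality (\ref{KT:3}) needs $\eps_n\to 0$ outright. Hypothesis (\ref{KT:E1}) is precisely what allows both requirements to be met simultaneously, for instance by taking $\eps_n$ to be the square root of $\max_{l\neq l'}\sup_{\tau\in T_0}\tau(e_n^{(l)}e_n^{(l')})$; once $\eps_n$ is fixed, every verification reduces to pointwise functional calculus combined with the central-sequence relations in $A_\infty$.
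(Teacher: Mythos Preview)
Your overall strategy is the right one and very close to the paper's, but there is a genuine gap in the orthogonality step~(\ref{KT:3}). You write that one can ``commute the central factor $(e_n^{(l)})^{1/2}$ past $e_n^{(l')}$ modulo a commutator that is null in $A_\infty$''. This is not justified: the hypothesis that each $(e_n^{(l)})_n$ lies in $A_\infty\cap A'$ only says that these sequences asymptotically commute with \emph{constant} sequences from $A$, not with one another. The central sequence algebra $A_\infty\cap A'$ is typically highly noncommutative (for a UHF algebra it contains unital copies of every matrix algebra), so there is no reason for $\|[(e_n^{(l)})^{1/2},e_n^{(l')}]\|\to 0$. Without this commutation your estimate $\|(1-f_{\eps_n}(g_n^{(l)}))e_n^{(l')}\|\le\sqrt{\eps_n}$ does not transfer to a bound on $\|\tilde e_n^{(l)}e_n^{(l')}\|$, since the intervening factor $(e_n^{(l)})^{1/2}$ sits between the cut-down and $e_n^{(l')}$.

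The paper's remedy is to build the conjugation into $g_n^{(l)}$ from the start: set
\[
g_n^{(l)}:=(e_n^{(l)})^{1/2}\Bigl(\sum_{l'\neq l}e_n^{(l')}\Bigr)(e_n^{(l)})^{1/2},
\]
and then $x_{n,r}^{(l)}:=(e_n^{(l)})^{1/2}(1-f_r(g_n^{(l)}))(e_n^{(l)})^{1/2}$. With this choice the orthogonality computation becomes purely algebraic: using $(x_{n,r}^{(l')})^2\le x_{n,r}^{(l')}\le e_n^{(l')}$ one gets
\[
\|x_{n,r}^{(l')}x_{n,r}^{(l)}\|^2\le\Bigl\|x_{n,r}^{(l)}\Bigl(\sum_{j\neq l}e_n^{(j)}\Bigr)x_{n,r}^{(l)}\Bigr\|
=\bigl\|(e_n^{(l)})^{1/2}(1-f_r(g_n^{(l)}))\,g_n^{(l)}\,(1-f_r(g_n^{(l)}))(e_n^{(l)})^{1/2}\bigr\|\le\tfrac{1}{r},
\]
where the middle equality is exactly the definition of $g_n^{(l)}$ --- no commutation needed. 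A secondary point: with your $n$-dependent cut-off $f_{\eps_n}$ (Lipschitz constant $1/\eps_n\to\infty$), the claim that $(f_{\eps_n}(g_n^{(l)}))_n$ remains central is not automatic; the paper sidesteps this by working with a fixed $r$ and then running a diagonal argument over $r$. Your tracial estimate for~(\ref{KT:2}) via $f_\eps(t)\le t/\eps$ is cleaner than the paper's polynomial-approximation route, but note that once $g_n^{(l)}$ carries the conjugation, the direct inequality $f_r(g_n^{(l)})\le r\,g_n^{(l)}$ no longer controls $\tau(e_n^{(l)}f_r(g_n^{(l)}))$ in one line, which is why the paper argues via $\tau((g_n^{(l)})^s)\to 0$ instead.
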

\begin{proof}
For each $l\in\{1,\dots,L\}$ and $n\in\N$ define
$$
g^{(l)}_n=(e_n^{(l)})^{1/2}\Big(\sum_{l'\neq l}e_n^{(l')}\Big)(e_n^{(l)})^{1/2},
$$
so $(g^{(l)}_n)_{n=1}^\infty$ is a central sequence for each $l$.  The hypothesis (\ref{KT:E1}) gives
$$
\sup_{\tau\in T_0}\tau(g_n^{(l)})\leq \sum_{l'\neq l}\sup_{\tau\in T(A)}\tau(e_n^{(l)}e_n^{(l')})\stackrel{n\rightarrow\infty}{\rightarrow}0.
$$

For $r\in\N$, define the continuous function $f_r:[0,\infty)\rightarrow [0,1]$ by $f_r(t)=\min(1,rt)$ and note that
$$
\inf_{t\geq 0}(1-f(t))t\leq 1/r.
$$
For $l\in\{1,\dots,L\}$ and $n,r\in\N$, define positive contractions by 
$$
x_{n,r}^{(l)}=(e_n^{(l)})^{1/2}\left(1-f_r(g_n^{(l)})\right)(e_n^{(l)})^{1/2}.
$$
These satisfy $x_{n,r}^{(l)}\leq e_n^{(l)}$ and for each $l$ and $r$, the sequence $(x_{n,r}^{(l)})_{n=1}^\infty$ represents an element of $A_\infty\cap A'$.  

For each $s\in\N$ and $l\in\{1,\dots,L\}$, we have
$$
\sup_{\tau\in T_0}\tau((g_n^{(l)})^s)\leq\|g_n^{(l)}\|^{s-1}\sup_{\tau\in T(A)}\tau(g_n^{(l)})\stackrel{n\rightarrow\infty}{\rightarrow}0.
$$
By choosing suitable polynomial approximations to $f_r(t)$ on $[0,L-1]$, it follows that
\begin{equation}\label{KT:E2}
\sup_{\tau\in T_0}\tau(e_n^{(l)}-x_{n,r}^{(l)})=\sup_{\tau\in T_0}\tau((e_n^{(l)})^{1/2}f_r(g_n^{(l)})(e_n^{(l)})^{1/2})\leq\|e^{(l)}_n\|\sup_{\tau\in T_0}\tau(f_r(g_n^{(l)}))\stackrel{n\rightarrow\infty}{\rightarrow}0,
\end{equation}
for each $l\in\{1,\dots,L\}$ and $r\in\N$. 

For each $l$, we compute exactly as in \cite[Lemma 3.2]{MS:Acta}, to obtain
\begin{align}
\left\|x_{n,r}^{(l')}x_{n,r}^{(l)}\right\|^2&=\left\|x_{n,r}^{(l)}(x_{n,r}^{(l')})^2x_{n,r}^{(l)}\right\|\nonumber\\
&\leq\Big\|x_{n,r}^{(l)}\Big(\sum_{j\neq l}x_{n,r}^{(j)}\Big)x_{n,r}^{(l)}\Big\|\nonumber\\
&=\Big\|(e_n^{(l)})^{1/2}\left(1-f_r(g_n^{(l)})\right)(e_n^{(l)})^{1/2}\Big(\sum_{j\neq l}x_{n,r}^{(j)}\Big)(e_n^{(l)})^{1/2}\left(1-f_r(g_n^{(l)})\right)(x_n^{(l)})^{1/2}\Big\|\nonumber\\
&=\big\|(e_n^{(l)})^{1/2}\left(1-f_r(g_n^{(l)})\right)g_n^{(l)}\left(1-f_r(g_n^{(l)})\right)(e_n^{(l)})^{1/2}\big\|\nonumber\\
&\leq\left\|\left(1-f_r(g_n^{(l)})\right)g_n^{(l)}\right\|\leq 1/r,\label{KT:E3}
\end{align}
for every $n,r\in\N$ and $l'\neq l$.  

Fix a countable dense sequence $(y_s)_{s=1}^\infty$ in $A$.  For each $r\in\N$, use (\ref{KT:E2}) and the fact that for each $l\in\{1,\dots,L\}$, $(x_{n,r}^{(l)})_{n=1}^\infty$ is a central sequence to obtain $N_r\in\N$ such that 
\begin{itemize}
\item $\|[x^{(l)}_{n,r},y_s]\|\leq 1/r$ for $s\in\{1,\dots,r\}$;
\item $\sup_{\tau\in T_0}\tau(e_{n}^{(l)}-x_{n,r}^{(l)})<1/r$
\end{itemize}
for $n\geq N_r$.  We may assume that $N_r<N_{r+1}$ for all $r$. Set $N_0=0$.  For $n\in\N$, let $r_n\in\N$ be such that $N_{r_n}<n\leq N_{r_{n+1}}$ so that $r_n\rightarrow\infty$ as $n\rightarrow\infty$ and define $\tilde{e}_n^{(l)}=x_{n,r_n}^{(l)}$.  The two conditions above give conditions (\ref{KT:1}) and (\ref{KT:2}), while (\ref{KT:3}) is a consequence of (\ref{KT:E3}).
\end{proof}

We are now in position to give the main technical lemma which is already enough to handle the $0$-dimensional compact extreme boundary case.

\begin{lemma}\label{L:Key}
Let $m\geq 0$, $k\geq 2$ and let $A$ be a simple separable unital  nuclear nonelementary $C^*$-algebra with $T(A)\neq\emptyset$ such that $\partial_eT(A)$ is compact with $\dim(\partial_eT(A))\leq m$. Then for each finite set $\F\subset A$ and $\eps>0$, there exist cpc order zero maps $\phi^{(0)},\dots,\phi^{(m)}:M_k\rightarrow A$ such that
\begin{equation}\label{L:Key:1}
\|[\phi^{(i)}(x),y]\|\leq \eps\|x\|,
\end{equation}
for all $i\in\{0,\dots,m\}$, $x\in M_k$, $y\in\F$ and such that for each $\tau\in\partial_eT(A)$, there exists $i(\tau)\in \{0,\dots,m\}$ such that $\tau(\phi^{(i(\tau))}(1_k))>1-\eps$.
\end{lemma}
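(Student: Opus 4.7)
The plan is to assemble the $\phi^{(i)}$ by patching together local cpc order zero maps drawn from extremal traces, using approximately central positive contractions organised as an orthogonal partition of unity indexed by a coloured closed refinement of an open cover of $\partial_eT(A)$.

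\textbf{Local trivialisation and coloured cover.} For each $\tau\in\partial_eT(A)$, Lemma \ref{L:OT} together with Remark \ref{R:OT} supplies a cpc order zero map $\phi_\tau:M_k\to A$, sufficiently central with respect to a suitable enlargement of $\F$, satisfying $\tau(\phi_\tau(1_k))>1-\eps/2$. Weak$^*$-continuity of $\sigma\mapsto\sigma(\phi_\tau(1_k))$ gives an open neighbourhood $U_\tau\subset\partial_eT(A)$ on which this inequality persists. Compactness extracts a finite subcover $U_1,\dots,U_L$, and Lemma \ref{L:CD} refines it to a closed cover $V_1,\dots,V_J$ equipped with an $(m+1)$-colouring $c$; for each $j$ fix $l(j)$ with $V_j\subset U_{l(j)}$ and write $\psi_j=\phi_{\tau_{l(j)}}$.

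\textbf{Partition of unity within each colour and orthogonalisation.} Fix a colour $i\in\{0,\dots,m\}$. The sets $\{V_j:c(V_j)=i\}$ are pairwise disjoint closed subsets of the normal space $\partial_eT(A)$, so normality supplies continuous functions $f_j^{(i)}:\partial_eT(A)\to[0,1]$ that equal $1$ on $V_j$ with pairwise disjoint supports across the colour; via the Bauer simplex identification these are continuous affine functions on $T(A)$. Lemma \ref{L:CT}, applied with the centralising set enlarged to contain each $\psi_j(M_k)$, yields central sequences $(e_{j,n}^{(i)})_n$ of positive contractions in $A$ with $\sup_{\tau\in T(A)}|\tau(e_{j,n}^{(i)})-f_j^{(i)}(\tau)|\to0$ and asymptotic commutation with each $\psi_j(M_k)$. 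The crucial intermediate claim is the tracial orthogonality
\[
\sup_{\tau\in\partial_eT(A)}|\tau(e_{j,n}^{(i)}e_{j',n}^{(i)})|\to0
\]
for $j\neq j'$ of the same colour, after which Lemma \ref{L:KT} with $T_0=\partial_eT(A)$ provides pairwise norm-orthogonal central sequences $(\tilde e_{j,n}^{(i)})$ preserving tracial values on $\partial_eT(A)$.

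\textbf{Gluing and trace verification.} Set $h_{j,n}^{(i)}=(\tilde e_{j,n}^{(i)})^{1/2}$; the sequence
\[
\phi_n^{(i)}(x)=\sum_{j:c(V_j)=i}h_{j,n}^{(i)}\,\psi_j(x)\,h_{j,n}^{(i)},\quad x\in M_k,
\]
induces a cpc order zero map $\Phi^{(i)}:M_k\to A_\infty$: the $h_{j,n}^{(i)}$ are pairwise norm-orthogonal within the colour, each $\psi_j$ is order zero, and asymptotic commutation of $h_{j,n}^{(i)}$ with $\psi_j(M_k)$ kills the cross-terms; the map is approximately central in the sense required by (\ref{L:Key:1}) because each $\psi_j$ and each $h_{j,n}^{(i)}$ is. Lemma \ref{L:Proj} lifts $\Phi^{(i)}$ to a sequence of cpc order zero maps $M_k\to A$, and selecting a slot with the requisite approximate centrality yields $\phi^{(i)}$. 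For $\tau\in\partial_eT(A)$ pick $j$ with $\tau\in V_j$, set $i(\tau)=c(V_j)$, and use $\tau(\tilde e_{j,n}^{(i(\tau))})\to 1$ together with the trace inequality $\tau(\tilde e\,\psi)\geq\tau(\psi)+\tau(\tilde e)-1$ (valid for positive contractions $\tilde e,\psi$, via $\tau((1-\tilde e)\psi)\leq\tau(1-\tilde e)$) to conclude
\[
\tau(\phi^{(i(\tau))}(1_k))\geq\tau(\tilde e_{j,n}^{(i(\tau))}\psi_j(1_k))>1-\eps.
\]

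The principal obstacle is the tracial orthogonality claim, uniformly over $\partial_eT(A)$. Pointwise orthogonality at a given extremal $\tau$ is a statement about the interaction of two central sequences in the II$_1$ factor $N_\tau=\pi_\tau(A)''$ with prescribed individual traces whose product vanishes, and even this is not automatic in a McDuff factor; arranging it therefore requires exploiting the specific Haagerup-diagonal construction behind Lemma \ref{L:CT} so that the disjointness of the supports of $f_j^{(i)}$ and $f_{j'}^{(i)}$ actually transfers into the trace of the product. Upgrading that pointwise information to uniform convergence across the compact set $\partial_eT(A)$ is where the quantitative uniformity of Haagerup's approximate diagonal, together with the finite covering dimension of $\partial_eT(A)$, plays the decisive role.
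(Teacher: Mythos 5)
Your construction follows the paper's proof essentially step for step: local order zero maps from Lemma \ref{L:OT} and Remark \ref{R:OT}, a coloured closed refinement via Lemma \ref{L:CD}, approximately central positive contractions realising bump functions via Lemma \ref{L:CT}, orthogonalisation via Lemma \ref{L:KT}, and the gluing $\sum_j (\tilde e^{(i)}_{j,n})^{1/2}\psi_j(\cdot)(\tilde e^{(i)}_{j,n})^{1/2}$ with the same trace estimate at the end. The one place you stop short is the tracial orthogonality claim, which you flag as the ``principal obstacle'' and attribute to delicate properties of the Haagerup diagonal and of central sequences in McDuff factors. That is a misdiagnosis: the claim is immediate from the Cauchy--Schwarz inequality for traces. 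For positive contractions $a,b$ one has $|\tau(ab)|\leq\tau(a)^{1/2}\tau(b)^{1/2}$, so once Lemma \ref{L:CT} gives $\delta_n:=\sup_{\tau\in T(A)}\max_j|\tau(e^{(i)}_{j,n})-f^{(i)}_j(\tau)|\to 0$ and the $f^{(i)}_j$ within a colour are chosen with pairwise disjoint supports (so $\min(f^{(i)}_j(\tau),f^{(i)}_{j'}(\tau))=0$ for every $\tau$), one gets $\sup_{\tau}|\tau(e^{(i)}_{j,n}e^{(i)}_{j',n})|\leq\sup_\tau\bigl((f^{(i)}_j(\tau)+\delta_n)(f^{(i)}_{j'}(\tau)+\delta_n)\bigr)^{1/2}\leq(2\delta_n+\delta_n^2)^{1/2}\to 0$. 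No information about the operators $e^{(i)}_{j,n}e^{(i)}_{j',n}$ beyond their traces is needed, no passage to $\pi_\tau(A)''$ occurs, and neither the internals of Haagerup's approximate diagonal nor the covering dimension plays any role in this step (the dimension has already done its work in producing the colouring); uniformity over $\partial_eT(A)$ comes for free because Lemma \ref{L:CT} is uniform over all of $T(A)$. The paper in fact gets away with even less: it applies Lemma \ref{L:KT} only with $T_0=\bigcup_j V^{(i)}_j$, where at least one of $f^{(i)}_j,f^{(i)}_{j'}$ vanishes pointwise by the requirement $f^{(i)}_j|_{V^{(i)}_{j'}}=0$ for $j'\neq j$, and since the final trace verification is only carried out for $\tau$ lying in some $V^{(i)}_j$, this weaker orthogonality suffices. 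With that one-line repair your argument closes and coincides with the paper's.
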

\begin{proof}
Fix $k\geq 2$, a finite subset $\F\subset A$ and $\eps>0$.  For each $\tau\in \partial_eT(A)$, use Lemma \ref{L:OT} to provide a cpc order zero map $\Phi_\tau:M_k\rightarrow A_\infty\cap A'$ with $\tau_\omega(\Phi(1_k))=1$ for all $\omega\in\BN\setminus\N$. By Remark \ref{R:OT}, we can go sufficiently far down a sequence of cpc order zero maps from $M_k$ into $A$ which lift $\Phi_\tau$ to find a cpc order zero map
 $\phi_\tau:M_k\rightarrow A$ with $\tau(\phi_\tau(1_k))>1-\eps$ and
\begin{equation}\label{L:Key:6}
\|[\phi_\tau(x),y]\|<\eps\|x\|,
\end{equation}
for all $x\in M_k$, $y\in\F$, $\tau\in\partial_eT(A)$.   Define an open neighbourhood of $\tau$ in $\partial_eT(A)$ by $U_\tau=\{\rho\in\partial_eT(A):\rho(\phi_\tau(1_k))>1-\eps\}$.  Then $\partial_eT(A)=\bigcup_{\tau\in\partial_eT(A)}U_\tau$, so by compactness there exist $\tau_1,\dots,\tau_L\in\partial_eT(A)$ such that $\partial_eT(A)=\bigcup_{l=1}^LU_{\tau_l}$.  As $\dim(\partial_eT(A))\leq m$, Lemma \ref{L:CD} gives a finite cover $\V$ of $\partial_eT(A)$ consisting of closed sets such that $\V$ refines $\U=\{U_{\tau_1},\dots,U_{\tau_L}\}$ and a colouring $c:\V\rightarrow \{0,1,\dots,m\}$ such that if $c(V)=c(V')$, then $V$ and $V'$ are disjoint.  For each $i\in\{0,\dots,m\}$, write $\V^{(i)}=c^{-1}(\{i\})=\{V^{(i)}_1,\dots,V^{(i)}_{L_i}\}$.  

Fix $i\in\{0,\dots,m\}$.  For each $j=0,\dots,L_i$, choose a continuous function $f^{(i)}_j:\partial_eT(A)\rightarrow [0,1]$ on $\partial_eT(A)$ with $f^{(i)}_j=1$ on $V^{(i)}_j$ and $f^{(i)}_j=0$ on $\bigcup_{j'\neq j}V^{(i)}_{j'}$.  This is possible as elements of $\V^{(i)}$ are pairwise disjoint closed subsets of $\partial_eT(A)$. As $\partial_eT(A)$ is compact, we can extend $f^{(i)}_j$ to a continuous affine function on $T(A)$ also denoted $f^{(i)}_j$. Now apply Lemma \ref{L:CT} to obtain central sequences $(e_n^{(i,j)})_{n=1}^\infty$ of positive contractions such that
$$
\lim_{n\rightarrow\infty}\sup_{\tau\in T(A)}|\tau(e^{(i,j)}_n)-f^{(i)}_j(\tau)|=0,
$$
for each $j=1,\dots,L_i$.  Thus
\begin{equation}\label{L:Key:3}
\lim_{n\rightarrow\infty}\inf_{\tau\in \V^{(i)}_j}\tau(e^{(i,j)}_n)=1,\quad \lim_{n\rightarrow\infty}\sup_{\tau\in\bigcup_{j'\neq j}V^{(i)}_{j'}}\tau(e^{(i,j)}_n)=0.
\end{equation}
By construction
$$
\lim\sup_{\tau\in\bigcup_{s=1}^{L_i}V_s^{(i)}}\tau(e_n^{(i,j)}e_n^{(i,j')})=0
$$
for all $j\neq j'$ in $\{1,\dots,L_i\}$.  Therefore we can apply Lemma \ref{L:KT} with $T_0=\bigcup_{s=1}^{L_i}V^{(i)}_s$  to obtain central sequences $(\tilde{e}^{(i,j)}_n)_{n=1}^\infty$ of positive contractions with $\tilde{e}^{(i,j)}_n\leq e^{(i,j)}_n$,
\begin{equation}\label{L:Key:2}
\lim_{n\rightarrow\infty}\|\tilde{e}_n^{(i,j)}\tilde{e}_n^{(i,j')}\|=0
\end{equation}
for $j\neq j'$ and
\begin{equation}\label{L:Key:4}
\lim_{n\rightarrow\infty}\sup_{\tau\in\bigcup_{s=1}^{L_i}V_s^{(i)}}\tau(e^{(i,j)}_n-\tilde{e}^{(i,j)}_n)=0,
\end{equation}
for $j\in\{1,\dots,L_i\}$.  In this way (\ref{L:Key:3}) and (\ref{L:Key:4}) give
\begin{equation}\label{L:Key:5}
\lim_{n\rightarrow\infty}\inf_{\tau\in \V^{(i)}_j}\tau(\tilde{e}^{(i,j)}_n)=1.
\end{equation}

For $i\in\{0,\dots,m\}$ and $j\in\{1,\dots,L_i\}$, there exists $l(i,j)\in\{1,\dots,L\}$ such that $V^{(i)}_j\subset U_{\tau_{l(i,j)}}$. For $i\in\{0,\dots,m\}$ and $n\in\N$, define maps $\psi^{(i)}_n:M_k\rightarrow A$ by
\begin{equation}\label{L:Key:8}
\psi^{(i)}_n(x)=\sum_{j=1}^{L_i}\tilde{e}^{(i,j)}_n{}^{1/2}\phi_{\tau_{l(i,j)}}(x)\tilde{e}^{(i,j)}_n{}^{1/2},
\end{equation}
for $x\in M_k$. For each $i$, the sequences $(\psi^{(i)}_n)_{n=1}^\infty$ induce maps $\Psi^{(i)}:M_k\rightarrow A_\infty$.  Further, by (\ref{L:Key:2}), we have $(\tilde{e}^{(i,j)}_n)\perp (\tilde{e}^{(i,j')}_n)$ in $A_\infty\cap A'$, so that $\Psi^{(i)}$ is a sum of $L_i$ pairwise orthogonal cpc order zero maps and so is cpc and order zero. The condition $(\tilde{e}^{(i,j)}_n)\perp (\tilde{e}^{(i,j')}_n)$ in $A_\infty\cap A'$ also allows us to use (\ref{L:Key:6}) to obtain
\begin{equation}\label{L:Key:9}
\|[\Psi^{(i)}(x),y]\|\leq \max_{j\in\{1,\dots,L_i\}}\|[\phi_{\tau_{l(i,j)}}(x),y]\|<\eps\|x\|,
\end{equation}
for all $i\in\{0,\dots,m\}$, $x\in M_k$, $y\in\F$. For $\rho\in V^{(i)}_j$, we have $\rho(\phi_{\tau_{l(i,j)}}(1_k))>1-\eps$ as $V^{(i)}_j\subset U_{\tau_{l(i,j)}}$, giving
\begin{align}
\rho(\psi^{(i)}_n(1_k))&\geq \rho(\tilde{e}^{(i,j)}_n{}^{1/2}\phi_{\tau_{l(i,j)}}(1_k)\tilde{e}^{(i,j)}_n{}^{1/2})\nonumber\\
&=\rho(\tilde{e}^{(i,j)}_n\phi_{\tau_{l(i,j)}}(1_k))\nonumber\\
&=\rho(\phi_{\tau_{l(i,j)}}(1_k))-\rho((1_A-\tilde{e}^{(i,j)}_n)\phi_{\tau_{l(i,j)}}(1_k))\nonumber\\
&>(1-\eps)-\rho(1_A-\tilde{e}^{(i,j)}_n).\label{L:Key:7}
\end{align}
Combining (\ref{L:Key:7}) with (\ref{L:Key:5}) gives
\begin{equation}\label{L:Key:10}
\liminf_{n\rightarrow\infty}\inf_{\rho\in\bigcup_{j=1}^{L_i}V_j^{(i)}}\rho(\psi^{(i)}_n(1_k))>1-\eps.
\end{equation}

For each $i\in\{0,\dots,m\}$, take a lifting $(\phi^{(i)}_n)_{n=1}^\infty$ of $\Psi^{(i)}$ to a sequence of cpc order zero maps $M_k\rightarrow A$.  We claim that for $n$ sufficiently large, the maps $\phi^{(0)}_n,\dots,\phi^{(m)}_n$ satisfy the properties claimed in the statement of the lemma.  Indeed, since 
$$\sup_{\substack{x\in M_k\\\|x\|\leq 1}}\|\phi^{(i)}_n(x)-\psi^{(i)}_n(x)\|\rightarrow 0,$$ 
(\ref{L:Key:9}) gives
$$
\|[\phi^{(i)}_n(x),y]\|<\eps\|x\|
$$
for all $n$ sufficiently large and for all $i\in\{0,\dots,m\}$, $x\in M_k$, $y\in\F$.  By (\ref{L:Key:10}), we have
\begin{equation}\label{L:Key:11}
\liminf_{n\rightarrow\infty}\inf_{\rho\in\bigcup_{j=1}^{L_i}V_j^{(i)}}\rho(\phi^{(i)}_n(1_k))>1-\eps,
\end{equation}
and so for all $n$ sufficiently large we have
$$
\rho(\phi^{(i)}_n(1_k))>1-\eps,
$$
for all $i\in\{0,\dots,m\}$ and $\rho\in\bigcup_{j=1}^{L_i}V_j^{(i)}$ .   Since $\bigcup_{i=0}^m\bigcup_{j=1}^{L_i}V^{(i)}_j=\partial_eT(A)$, the result follows with $i(\rho)=\min\{i:\rho\in \bigcup_{j=1}^{L_i}V_j^{(i)}\}$.
\end{proof}

In the zero dimensional case, we immediately obtain uniformly tracially large order zero maps from the previous lemma.
\begin{theorem}\label{T:0D}
Let $A$ be a simple separable unital nuclear nonelementary $C^*$-algebra with $T(A)\neq\emptyset$ and $\partial_eT(A)$ compact and zero dimensional. Then for each $k\geq 2$, $A$ admits uniformly tracially large order zero maps $M_k\rightarrow A_\infty\cap A'$.
\end{theorem}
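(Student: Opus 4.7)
The plan is to specialise Lemma \ref{L:Key} to $m=0$, which already produces a single cpc order zero map tracially large on every extreme trace, and then use the Bauer simplex structure of $T(A)$ to promote this pointwise estimate on $\partial_eT(A)$ to a uniform estimate over all of $T(A)$.

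Concretely, fix $k\geq 2$. Choose an increasing sequence $\F_1\subset\F_2\subset\cdots$ of finite subsets of $A$ whose union is dense, and apply Lemma \ref{L:Key} with $m=0$, $\F=\F_n$, and $\eps=1/n$ to obtain cpc order zero maps $\phi_n:M_k\to A$ satisfying
\[
\|[\phi_n(x),y]\|\leq \tfrac{1}{n}\|x\|,\quad x\in M_k,\ y\in\F_n,
\]
and
\[
\tau(\phi_n(1_k))>1-\tfrac{1}{n},\quad \tau\in\partial_eT(A).
\]
The sequence $(\phi_n)_{n=1}^\infty$ is bounded and induces a cpc order zero map $\Phi:M_k\to A_\infty$; the asymptotic centrality and the density of $\bigcup_n\F_n$ place the image in $A_\infty\cap A'$.

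To show $\Phi$ is uniformly tracially large, I would invoke Lemma \ref{L:TI} and verify $\lim_{n\to\infty}\min_{\tau\in T(A)}\tau(\phi_n(1_k))=1$. Since $\partial_eT(A)$ is compact, $T(A)$ is a Bauer simplex, so every $\tau\in T(A)$ is the barycentre of a (unique) probability measure $\mu_\tau$ supported on $\partial_eT(A)$. The functional $\rho\mapsto \rho(\phi_n(1_k))$ is continuous and affine, hence
\[
\tau(\phi_n(1_k))=\int_{\partial_eT(A)}\rho(\phi_n(1_k))\,d\mu_\tau(\rho)\geq 1-\tfrac{1}{n},
\]
uniformly in $\tau\in T(A)$. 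Thus $\min_{\tau\in T(A)}\tau(\phi_n(1_k))\to 1$, and Lemma \ref{L:TI} completes the proof.

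There is no real obstacle: Lemma \ref{L:Key} at $m=0$ is essentially a one-step reduction to the problem, and the Bauer simplex hypothesis does the rest via barycentric decomposition. The only minor point to keep in mind is that the choice of a dense exhausting sequence $\{\F_n\}$ and $\eps_n\to 0$ is needed so that $(\phi_n)$ actually represents a map with image in $A_\infty\cap A'$ rather than merely $A_\infty$.
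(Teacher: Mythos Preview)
Your proof is correct and follows essentially the same approach as the paper: apply Lemma~\ref{L:Key} with $m=0$, $\F=\F_n$, $\eps=1/n$ to produce the sequence $(\phi_n)$, then extend the tracial lower bound from $\partial_eT(A)$ to all of $T(A)$ and invoke Lemma~\ref{L:TI}. The only cosmetic difference is that where you spell out the barycentric integral over $\partial_eT(A)$, the paper simply writes ``by convexity'' --- the function $\tau\mapsto\tau(\phi_n(1_k))$ is continuous and affine, so the bound on extreme points passes to the closed convex hull by Krein--Milman without any explicit appeal to Choquet theory or the Bauer simplex structure.
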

\begin{proof}
Fix $k\geq 2$. Take a nested sequence $(\F_n)_{n=1}^\infty$ of finite subsets of $A$ whose union is dense in $A$.  For each $n$, Lemma \ref{L:Key} gives a cpc order zero map $\phi_n:M_k\rightarrow A$ with
$$
\|[\phi_n(x),y]\|\leq \frac{1}{n}\|x\|,
$$
for all $y\in\F_n$ and $x\in M_k$, and
\begin{equation}\label{T:0D:1}
\tau(\phi_n(1_k))>1-\frac{1}{n}
\end{equation}
for all $\tau\in\partial_eT(A)$ and all $n\in\N$. By convexity, (\ref{T:0D:1}) holds for all $\tau\in T(A)$ and $n\in\N$. Thus the sequence $(\phi_n)_{n=1}^\infty$ induces a uniformly tracially large cpc order zero map $\Phi:M_k\rightarrow A_\infty\cap A'$ by Lemma \ref{L:TI}.
\end{proof}

\begin{corollary}
Let $A$ be a simple separable unital  nuclear nonelementary $C^*$-algebra with $T(A)\neq\emptyset$ and $\partial_eT(A)$ compact and zero dimensional.  Suppose $A$ has strict comparison, then $A$ is $\Z$-stable.
\end{corollary}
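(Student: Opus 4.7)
The plan is to observe that this corollary is a direct combination of the two preceding results, namely Theorem \ref{T:0D} and the Matui--Sato theorem as formulated in Theorem \ref{MS}. There is essentially no further work to do beyond verifying that the hypotheses line up.

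First I would note that the hypotheses of Theorem \ref{T:0D} are exactly those imposed on $A$ in the corollary: $A$ is simple, separable, unital, nuclear and nonelementary, with $T(A)\neq\emptyset$ and with $\partial_e T(A)$ compact and zero dimensional. Applying Theorem \ref{T:0D}, for each $k\geq 2$ there exists a uniformly tracially large cpc order zero map $\Phi_k:M_k\rightarrow A_\infty\cap A'$.

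Next I would appeal to Theorem \ref{MS}. That result requires $A$ to be simple, separable, unital and nuclear with strict comparison, together with the existence, for each $k\geq 2$, of a uniformly tracially large cpc order zero map $M_k\rightarrow A_\infty\cap A'$. Strict comparison is assumed in the statement of the corollary, and the order zero maps have just been produced in the previous paragraph. Theorem \ref{MS} then yields that $A$ is $\Z$-stable, completing the proof.

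There is no real obstacle here: all the work has been done in the two preceding theorems and in the Matui--Sato machinery that feeds into Theorem \ref{MS}. The only thing worth flagging is that the hypothesis $T(A)\neq\emptyset$ (present in Theorem \ref{T:0D}) is automatic given compactness of $\partial_eT(A)$ in any reasonable reading, but it is cleanest simply to carry it as an explicit assumption as in the corollary. The corollary is effectively a packaging statement that records the zero dimensional case of the main circle of ideas before the higher dimensional extension is carried out in Section \ref{Sect3} and the final section.
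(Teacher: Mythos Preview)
Your proposal is correct and matches the paper's own proof, which simply says ``This follows immediately from Theorems \ref{MS} and \ref{T:0D}.'' The extra paragraph verifying that the hypotheses line up is fine but not strictly needed; the only small inaccuracy is the aside that the higher dimensional extension is carried out in Section \ref{Sect3}---that section is where the zero dimensional case (including this corollary) lives, with the higher dimensional argument deferred to Section \ref{Sect4}.
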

\begin{proof}
This follows immediately from Theorems \ref{MS} and \ref{T:0D}.
\end{proof}
\section{Higher dimensional compact extreme boundaries}\label{Sect4}

In this last section we extend the previous work to higher dimensional compact extreme boundaries.  The argument is based on the techniques developed in \cite{W:Invent1,W:Invent2}. The starting point is to use the finite dimensional compact extreme boundary to obtain a finite collection of cpc order zero maps with a large tracial sum in the sense of Lemma \ref{L:S1} below.  It is at this point in the argument where it is critical that we can obtain the family of order zero maps in Lemma \ref{L:Key} not just with large tracial sum but so that for each $\tau\in \partial_eT(A)$ one member of the family is large in $\tau$.

\begin{lemma}\label{L:S1}
Let $m\in\N$ and let $A$ be a simple  separable unital nuclear nonelementary $C^*$-algebra with $T(A)\neq\emptyset$ and $\partial_eT(A)$ is compact with $\dim(\partial_eT(A))\leq m$. Then, for $k\geq 2$ and any separable subspace $X\subset A_\infty$, there exist cpc order zero maps $\phi^{(0)},\dots,\phi^{(m)}:M_k\rightarrow A_\infty\cap A'\cap X'$ such that
\begin{equation}\label{L:S1:2}
\tau\Big(\sum_{i=0}^m\phi^{(i)}(1_k)b\Big)\geq\tau(b),
\end{equation}
for all $\tau\in T_\infty(A)$ and all $b\in (A_\infty)_+$.
\end{lemma}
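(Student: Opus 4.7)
The plan is to iterate Lemma~\ref{L:Key}, induce maps into the sequence algebra, and deduce \eqref{L:S1:2} by a Cauchy--Schwarz estimate combined with integration against barycentric measures. For Step~1, fix dense sequences $(y_j)$ in $A$ and $(x_j)$ in $X$, with each $x_j$ lifted to a uniformly bounded representing sequence $(x_j(n))_n \in \ell^\infty(A)$. Apply Lemma~\ref{L:Key} with $\eps_n = 1/n$ and $\F_n = \{y_1,\dots,y_n,x_1(n),\dots,x_n(n)\}$ to obtain cpc order zero maps $\phi^{(i)}_n:M_k\to A$ ($i=0,\dots,m$) with $\|[\phi^{(i)}_n(x),y]\|\leq \|x\|/n$ for $y\in\F_n$, and such that for each $\tau\in\partial_eT(A)$ there is $i(\tau,n)\in\{0,\dots,m\}$ with $\tau(\phi^{(i(\tau,n))}_n(1_k))>1-1/n$. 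The sequences $(\phi^{(i)}_n)_n$ induce cpc order zero maps $\Phi^{(i)}:M_k\to A_\infty\cap A'\cap X'$, the commutator estimates forcing asymptotic commutation with $A$ via the $y_j$ and with $X$ via the $x_j(n)$.

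For the inequality in Step~2, fix $\tau\in T_\infty(A)$ arising from $(\tau_n)\in T(A)^{\N}$ and $\omega\in\BN\setminus\N$, and represent $b\in(A_\infty)_+$ by a bounded sequence $(b_n)$ of positive elements in $A$. Write $E_n:=\sum_{i=0}^m\phi^{(i)}_n(1_k)$. For every extremal $\rho\in\partial_eT(A)$ and every $n$, set $p:=\phi^{(i(\rho,n))}_n(1_k)$; then $\rho(1-p)<1/n$ and, via Cauchy--Schwarz for the trace $\rho$ together with the inequality $(1-p)^2\leq 1-p$,
\[
\rho\bigl((1-p)b_n\bigr) \leq \bigl(\rho((1-p)^2)\,\rho(b_n^2)\bigr)^{1/2} \leq \|b_n\|/\sqrt n.
\]
Since $\rho(\phi^{(j)}_n(1_k)b_n)=\rho(\phi^{(j)}_n(1_k)^{1/2}b_n\phi^{(j)}_n(1_k)^{1/2})\geq 0$ for every $j$, discarding all summands except $j=i(\rho,n)$ yields $\rho(E_n b_n)\geq \rho(pb_n) = \rho(b_n) - \rho((1-p)b_n) \geq \rho(b_n)-\|b_n\|/\sqrt n$ for every extremal $\rho$. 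Because $\partial_eT(A)$ is compact, the Bauer simplex structure of $T(A)$ gives each $\tau_n$ a unique barycentric probability measure $\mu_n$ on $\partial_eT(A)$; the function $\rho\mapsto \rho(E_n b_n)-\rho(b_n)+\|b_n\|/\sqrt n$ is weak$^*$-continuous on $T(A)$ and nonnegative on $\partial_eT(A)$, so integrating against $\mu_n$ gives $\tau_n(E_n b_n)\geq\tau_n(b_n)-\|b_n\|/\sqrt n$. Taking the $\omega$-limit delivers $\tau(Eb)\geq\tau(b)$, where $E=\sum_i\Phi^{(i)}(1_k)$, as required.

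The main obstacle is that Lemma~\ref{L:Key}'s tracial condition only governs extremal traces of $A$, whereas the conclusion must accommodate all $\tau\in T_\infty(A)$, including those built from non-extremal traces for which no single $\Phi^{(i)}(1_k)$ need have trace $1$. The resolution is that Cauchy--Schwarz converts the extremal-trace condition into a per-extremal-trace bound with error $O(1/\sqrt n)$ that is linear and continuous in $\rho$, hence compatible with Choquet integration; the barycentric measure propagates the inequality to all of $T(A)$, and the ultrafilter limit extends it to $T_\infty(A)$.
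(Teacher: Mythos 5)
Your proof is correct and follows essentially the same route as the paper's: apply Lemma~\ref{L:Key} along increasing finite sets built from lifts of dense sequences, obtain the estimate at extremal traces by discarding all but the dominant summand, extend to all of $T(A)$ by affineness/convexity, and pass to $T_\infty(A)$ via the ultralimit. The only (harmless) differences are that you invoke Cauchy--Schwarz to bound $\rho((1-p)b_n)$, yielding an $O(1/\sqrt{n})$ error where the paper gets $O(1/n)$ directly from $\rho((1-p)b_n)\leq\|b_n\|\rho(1-p)$, and you phrase the extension to non-extremal traces via barycentric measures rather than the closed convex hull.
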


\begin{proof}
Fix $k\geq 2$ and a separable subspace $X\subset A_\infty$.  Let $(y^{(i)})_{i=1}^\infty$ be a countable dense subset of $C^*(A,X)\subset A_\infty$ and lift each $y^{(i)}$ to a sequence $(y^{(i)}_n)_{n=1}^\infty$ in $\ell^\infty(A)$.  For $n\in\N$, define a finite subset of $A$ by $\F_n=\{y^{(i)}_m:1\leq i,m\leq n\}$.  For each $n$, use Lemma \ref{L:Key} to obtain cpc order zero maps $\phi^{(0)}_n,\dots,\phi^{(m)}_n:M_k\rightarrow A$ with
$$
\|[\phi^{(i)}_n(x),y]\|\leq \frac{1}{n}\|x\|,
$$
for $i\in\{0,\dots,m\}$, $x\in M_k$, $y\in\F_n$ and $n\in\N$ and such that for each $\rho\in \partial_eT(A)$ and $n\in\N$, there exists $i(\rho,n)$ with $\rho(\phi^{(i(\rho,n))}_n(1_k))>1-1/n$. These maps induce cpc order zero maps $\phi^{(0)},\dots,\phi^{(m)}:M_k\rightarrow A_\infty\cap A'\cap X'$.

Consider $\rho\in \partial_eT(A)$. For $n\in\N$, $a\in A_+$ with $\|a\|\leq 1$, we have
\begin{align}
\rho\Big(\sum_{i=0}^m\phi^{(i)}_n(1_k)a\Big)&\geq\rho(\phi_n^{(i(\rho,n))}(1_k)a)\nonumber\\
&=\rho(a)-\rho\Big(\big(1_A-\phi_n^{(i(\rho,n))}(1_k)\big)a\Big)\nonumber\\
&\geq\rho(a)-\rho(1_A-\phi_n^{(i(\rho,n))}(1_k))\nonumber\\
&\geq\rho(a)-\frac{1}{n}.\label{L:S1:1}
\end{align}
By convexity, the estimate (\ref{L:S1:1}) holds for all $\rho\in T(A)$.  Given a sequence $(\tau_n)_{n=1}^\infty$ in $T(A)$, a sequence $(b_n)_{n=1}^\infty$ of positive contractions in $A$ representing $b\in A_\infty$, and a free ultrafilter $\omega\in\BN\setminus\N$, taking limits in (\ref{L:S1:1}) gives
$$
\lim_{n\rightarrow\omega}\tau_n\Big(\sum_{i=0}^m\phi^{(i)}_n(1_k)b_n\Big)\geq\lim_{n\rightarrow\omega}\tau_n(b_n).
$$
That is
$$
\tau\Big(\sum_{i=0}^m\phi^{(i)}(1_k)b\Big)\geq\tau(b),
$$
for all $\tau\in T_\infty(A)$ and all $b\in (A_\infty)_+$, verifying (\ref{L:S1:2}). 
\end{proof}

Before proceeding, we extract a standard central sequence argument from \cite{W:Invent2}. Writing $\beta=\frac{1}{\overline{k}+1}$ and following the proof of \cite[Proposition 4.6]{W:Invent2} verbatim from equation (38) through to the 4th displayed equation on page 288 of \cite{W:Invent2}, one obtains the following lemma.
\begin{lemma}\label{W:Extract}
Let $A$ be a separable unital $C^*$-algebra, let $X\subset A_\infty$ be a separable subspace and let $0<\beta<1$. Suppose that for each $\eta>0$, there exist orthogonal positive contractions $d^{(0)}_\eta,d^{(1)}_\eta$ in $A_\infty\cap A'\cap X'$ with 
$$
\tau(d^{(i)}_\eta b)\geq \beta\tau(b)-\eta
$$
for $i\in\{0,1\}$, $\tau\in T_\infty(A)$ and contractions $b\in C^*(A,X)_+$.  Then there exist orthogonal positive contractions $d^{(0)},d^{(1)}\in A_\infty\cap A'\cap X'$ with
$$
\tau(d^{(i)}b)\geq\beta\tau(b),
$$
for $i=\{0,1\}$, $\tau\in T_\infty(A)$ and contractions $b\in C^*(A,X)_+$.
\end{lemma}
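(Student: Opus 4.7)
The plan is a standard diagonal reindexing argument that trades the $\eta$-error for a sequence of approximations vanishing as $n\to\infty$. I would fix countable dense sequences $(a_s)_{s\geq 1}$ in $A$, $(x_s)_{s\geq 1}$ in $X$ (with fixed lifts $(x_{s,m})_m$ in $\ell^\infty(A)$), and $(b_r)_{r\geq 1}$ among positive contractions of $C^*(A,X)$ (with fixed lifts $(b_{r,m})_m$ consisting of positive contractions in $A$). For each $n\in\N$, applying the hypothesis with $\eta=1/n$ produces orthogonal positive contractions $d^{(0)}_n,d^{(1)}_n\in A_\infty\cap A'\cap X'$, each lifted to a sequence $(d^{(i)}_{n,m})_m$ of positive contractions in $A$.

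The key translation step is that the hypothesis $\tau(d^{(i)}_n b_r)\geq\beta\tau(b_r)-1/n$ for every $\tau\in T_\infty(A)$ forces the uniform sequence-level estimate
$$
\liminf_{m\to\infty}\inf_{\tau\in T(A)}\bigl(\tau(d^{(i)}_{n,m}b_{r,m})-\beta\tau(b_{r,m})\bigr)\geq -1/n,
$$
by a short contradiction argument: a failure would permit the extraction of a sequence of traces $(\tau_m)\subset T(A)$ and an ultrafilter $\omega\in\BN\setminus\N$ whose associated trace in $T_\infty(A)$ violates the hypothesis. With this in hand I would choose $m(n)\in\N$ increasing in $n$ and so large that for $i=0,1$ and all $s,r\leq n$:
\begin{itemize}
\item $\|[d^{(i)}_{n,m(n)},a_s]\|<1/n$ (from $d^{(i)}_n\in A'$);
\item $\|[d^{(i)}_{n,m(n)},x_{s,m(n)}]\|<1/n$ (from $d^{(i)}_n\in X'$);
\item $\|d^{(0)}_{n,m(n)}d^{(1)}_{n,m(n)}\|<1/n$ (from orthogonality in $A_\infty$);
\item $\inf_{\tau\in T(A)}\bigl(\tau(d^{(i)}_{n,m(n)}b_{r,m(n)})-\beta\tau(b_{r,m(n)})\bigr)\geq -2/n$ (from the translation above).
\end{itemize}

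I then set $d^{(i)}=[(d^{(i)}_{n,m(n)})_n]\in A_\infty$. The first three bullets, together with density of $(a_s)$ and $(x_s)$ and norm-continuity of commutators, give $d^{(0)},d^{(1)}\in A_\infty\cap A'\cap X'$ with $d^{(0)}d^{(1)}=0$. For the tracial conclusion, any $\tau\in T_\infty(A)$ is of the form $\lim_{n\to\omega}\tau_n$ for some $(\tau_n)\subset T(A)$ and $\omega\in\BN\setminus\N$, so the fourth bullet gives $\tau(d^{(i)}b_r)-\beta\tau(b_r)\geq\lim_{n\to\omega}(-2/n)=0$; density of $(b_r)_r$ and norm-continuity in $b$ extend this to every positive contraction $b\in C^*(A,X)$. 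The main (though notationally modest) obstacle is the translation step: one must patiently unwind the definition of $T_\infty(A)$ to convert the ultrafilter-indexed inequality into a uniform estimate over $T(A)$ at the sequence level, so that centrality, orthogonality, and the tracial bounds can all be secured simultaneously by a single diagonal choice of $m(n)$.
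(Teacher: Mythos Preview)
Your approach is the standard central sequence diagonal argument that the paper invokes (the paper gives no independent proof; it simply points to \cite[Proposition~4.6]{W:Invent2}). The translation step and the overall strategy are fine. However, there is a genuine indexing error in your diagonal construction that prevents the argument from working as written.

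You define $d^{(i)}=[(d^{(i)}_{n,m(n)})_n]\in A_\infty$, so the $n$-th entry of the new representing sequence is $d^{(i)}_{n,m(n)}$. For $d^{(i)}$ to commute with $x_s\in X\subset A_\infty$ you therefore need $\|[d^{(i)}_{n,m(n)},x_{s,n}]\|\to 0$ as $n\to\infty$, since $x_s$ is represented by $(x_{s,n})_n$ and \emph{not} by the subsequence $(x_{s,m(n)})_n$ (which in general represents a different element of $A_\infty$). Your second bullet, however, only bounds $\|[d^{(i)}_{n,m(n)},x_{s,m(n)}]\|$. The same mismatch contaminates the tracial conclusion: for $\tau\in T_\infty(A)$ one has $\tau(d^{(i)}b_r)=\lim_{n\to\omega}\tau_n(d^{(i)}_{n,m(n)}b_{r,n})$, whereas your fourth bullet controls $\tau(d^{(i)}_{n,m(n)}b_{r,m(n)})$. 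Since $m(n)\neq n$ in general, neither commutation with $X$ nor the tracial inequality follows from what you have arranged.

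The repair is the usual one. Rather than picking a single $m(n)$, choose strictly increasing $M_n\in\N$ so that your four bulleted conditions hold for \emph{all} $m\geq M_n$ and $s,r\leq n$ (each condition is a limit in $m$, so this is possible). Then define the diagonal sequence indexed by $m$: set $e^{(i)}_m=d^{(i)}_{n(m),m}$ where $n(m)$ is the largest $n$ with $M_n\leq m$. Now the $m$-th entry of $[e^{(i)},x_s]$ is $[d^{(i)}_{n(m),m},x_{s,m}]$, which is exactly what your second bullet controls, and the tracial estimate involves $b_{r,m}$ as required. With this adjustment your argument goes through and coincides with what the paper extracts from \cite{W:Invent2}.
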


We can now use Lemma \ref{L:S1} to establish a version of \cite[Proposition 4.6]{W:Invent2}.  Essentially the argument is the same as the deduction of (36) and (37) from (33) in \cite{W:Invent2}, but since the maps arising in our proof have slightly different domains, we give the details for completeness.  For $0\leq\eta_1<\eta_2$, we denote by $g_{\eta_1,\eta_2}$ the continuous piecewise linear function on $\mathbb R$ given by
\begin{equation}\label{Defg}
g_{\eta_1,\eta_2}(t)=\begin{cases}1,&t\geq \eta_2;\\\frac{t-\eta_1}{\eta_2-\eta_1},&\eta_1<t<\eta_2;\\0,&t\leq\eta_1.\end{cases}
\end{equation}
\begin{lemma}\label{L:S2}
Given $m\geq 0$ and $k\geq 1$ there is $1\leq L_{m,k}\in\N$ such that, given a simple separable unital  nuclear nonelementary $C^*$-algebra $A$ with $T(A)\neq\emptyset$ such that $\partial_eT(A)$ is compact with $\dim(\partial_eT(A))\leq m$, and a separable subspace $X\subset A_\infty$, then there exist pairwise orthogonal contractions
$$
d^{(1)},\dots,d^{(k)}\in A_\infty\cap A'\cap X'
$$
such that
$$
\tau(d^{(i)}b)\geq \frac{1}{L_{m,k}}\tau(b)
$$
for all $i\in\{1,\dots,k\}$, $\tau\in T_\infty(A)$ and $b\in C^*(A,X)_+\subset A_\infty$.
\end{lemma}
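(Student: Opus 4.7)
The plan is to take $L_{m,k}=k(m+1)$, obtain $m+1$ cpc order zero maps $M_k\to A_\infty\cap A'\cap X'$ from Lemma \ref{L:S1}, extract the $k$ pairwise orthogonal ``matrix unit'' images from each, and then orthogonalise across the different order zero maps using a geometric-series cut-off argument, finishing with the natural $k$-tuple extension of Lemma \ref{W:Extract} to pass from approximate to exact orthogonality.

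First, apply Lemma \ref{L:S1} with the given $k$ and $X$ to obtain cpc order zero maps $\phi^{(0)},\dots,\phi^{(m)}\colon M_k\to A_\infty\cap A'\cap X'$ with
\[
\tau\Big(\sum_{i=0}^m\phi^{(i)}(1_k)b\Big)\geq\tau(b),\qquad \tau\in T_\infty(A),\ b\in(A_\infty)_+.
\]
Set $h^{(i)}_j:=\phi^{(i)}(e_{jj})\in A_\infty\cap A'\cap X'$. For fixed $i$ these are pairwise orthogonal positive contractions summing to $\phi^{(i)}(1_k)$. Since every $b\in C^*(A,X)$ commutes with $\phi^{(i)}(M_k)$, the functional $x\mapsto\tau(\phi^{(i)}(x)b)$ is tracial on $M_k$ (via the structure theorem $\phi^{(i)}(x)=h_i\pi_i(x)$ of \cite{WZ:MJM} combined with cyclicity of $\tau$), so $\tau(h^{(i)}_j b)=\tfrac{1}{k}\tau(\phi^{(i)}(1_k)b)$ and summing in $i$ yields $\sum_i\tau(h^{(i)}_j b)\geq\tfrac{1}{k}\tau(b)$.

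Next, for each $\eta>0$ I would construct pairwise orthogonal positive contractions $p_0,\dots,p_m\in A_\infty\cap A'\cap X'$ such that $\tau(p_i\phi^{(i)}(x)b)$ approximates $\tau(\phi^{(i)}(x)b)$ to within $\eta$ on the unit ball, using the geometric-series cut-off technique of \cite[proof of Proposition 4.6]{W:Invent2}: inductively sandwich $g_{\eta,2\eta}(\phi^{(i)}(1_k))$ between copies of $(1-p_0-\cdots-p_{i-1})^{1/2}$ to enforce tracial orthogonality, then invoke Lemma \ref{L:KT} (with $T_0=T(A)$, applied to suitable lifts in $\ell^\infty(A)$) to pass from tracial to norm orthogonality of the $p_i$. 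Setting $d^{(j)}_\eta:=\frac{1}{m+1}\sum_{i=0}^m p_i^{1/2}h^{(i)}_jp_i^{1/2}$ produces positive contractions that are pairwise orthogonal in $j$---across $i$ because $p_ip_{i'}=0$, within $i$ because $h^{(i)}_jh^{(i)}_{j'}=0$---and satisfy $\tau(d^{(j)}_\eta b)\geq\frac{1}{k(m+1)}\tau(b)-\eta$ for every $\tau\in T_\infty(A)$ and contraction $b\in C^*(A,X)_+$.

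Finally, apply the natural $k$-tuple extension of Lemma \ref{W:Extract} (whose proof is identical, running the same functional calculus simultaneously on the $k$ coordinates) with $\beta=\frac{1}{k(m+1)}$ to upgrade the approximate $d^{(j)}_\eta$ to pairwise orthogonal $d^{(1)},\dots,d^{(k)}\in A_\infty\cap A'\cap X'$ with $\tau(d^{(j)}b)\geq\frac{1}{L_{m,k}}\tau(b)$, giving the conclusion with $L_{m,k}=k(m+1)$. The main obstacle is the middle paragraph: executing the geometric-series construction of the $p_i$ with uniform tracial control, since the $\phi^{(i)}(1_k)$ need not commute and the cut-offs must be assembled inside $A_\infty$ rather than in a tracial von Neumann completion; the various $\eta$'s must be chosen consistently through the induction in $i$ so that the cumulative tracial loss remains bounded by a single constant. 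A secondary technical point is verifying the $k$-tuple form of Lemma \ref{W:Extract}, which is essentially formal.
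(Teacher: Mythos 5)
There is a genuine gap, and it sits exactly where you flag the ``main obstacle'': the pairwise orthogonal contractions $p_0,\dots,p_m$ with $\tau(p_i\phi^{(i)}(x)b)\approx\tau(\phi^{(i)}(x)b)$ for \emph{every} $i$ cannot exist in general. Lemma \ref{L:S1} gives no control on how the supports of the $\phi^{(i)}(1_k)$ overlap; they may even coincide. Concretely, suppose $m=1$ and $\phi^{(0)}=\phi^{(1)}=\phi$ with $\tau(\phi(1_k)b)=\tfrac12\tau(b)$ (consistent with the conclusion of Lemma \ref{L:S1}, since then $\tau(\sum_i\phi^{(i)}(1_k)b)=\tau(b)$). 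If $p_0\perp p_1$ are positive contractions with $\tau(p_i\phi(1_k)b)\geq\tau(\phi(1_k)b)-\eta$ for $i=0,1$, then summing gives $\tau\bigl((p_0+p_1)\phi(1_k)b\bigr)\geq\tau(b)-2\eta$, while $p_0+p_1\leq 1$ forces $\tau\bigl((p_0+p_1)\phi(1_k)b\bigr)\leq\tau(\phi(1_k)b)=\tfrac12\tau(b)$ --- a contradiction for small $\eta$. The geometric-series sandwiching you describe would in fact produce $p_1$ essentially disjoint from the support of $\phi^{(0)}=\phi^{(1)}$, so $\tau(p_1\phi^{(1)}(x)b)$ would be near $0$, not near $\tau(\phi^{(1)}(x)b)$. (A secondary issue: your $d^{(j)}_\eta=\tfrac{1}{m+1}\sum_ip_i^{1/2}h^{(i)}_jp_i^{1/2}$ are not orthogonal in $j$ within a fixed $i$ unless $p_i$ commutes with $\phi^{(i)}(M_k)$, since $h^{(i)}_jp_ih^{(i)}_{j'}$ need not vanish; this is fixable by enlarging $X$, but the first problem is not.)

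The paper avoids attempting to orthogonalise across the index $i$ altogether. For $k=2$ it applies Lemma \ref{L:S1} with matrix size $2(m+1)$ (not $k$), forms the single element $h=\sum_{i=0}^m\phi^{(i)}(e_{11})$, which satisfies $\tau(hb)\geq\tfrac{1}{2(m+1)}\tau(b)$, and takes $d^{(0)}_\eta=g_{\eta,2\eta}(h)$ together with $d^{(1)}_\eta=1_{A_\infty}-g_{0,\eta}(h)$: these are orthogonal by functional calculus on the one element $h$, so no cross-$i$ orthogonalisation is needed. The point of the inflated matrix size is a dimension-function (Cuntz semigroup) estimate showing that the support of $h$ has trace at most $\tfrac12$, whence $\tau(d^{(1)}_\eta b)\geq\tfrac12\tau(b)$; Lemma \ref{W:Extract} then removes the $\eta$, and general $k$ follows by induction as in \cite[Proposition 4.6]{W:Invent2} rather than by a one-shot $k$-tuple construction. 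If you want to repair your argument, you need some mechanism of this kind that concedes tracial mass where the $\phi^{(i)}$ overlap instead of trying to retain all of it for each $i$ separately.
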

\begin{proof}
When $k=1$, we can take $L_{m,k}=1$ and $d^{(1)}=1_{A_\infty}$.  We prove the statement when $k=2$.  Once the statement is established for $k=2$, the general case follows by induction using exactly the same argument as in the last two paragraphs of the proof of \cite[Proposition 4.6]{W:Invent2}.

Define $L_{m,2}=2(m+1)$ and fix a separable subspace $X\subset A_\infty$. By Lemma \ref{L:S1}, there exist cpc order zero maps $\phi^{(0)},\dots,\phi^{(m)}:M_{2(m+1)}\rightarrow A_\infty\cap A'\cap X'$ such that
\begin{equation}\label{L:S2:2}
\tau\Big(\sum_{i=0}^m\phi^{(i)}(1_{2(m+1)})b\Big)\geq\tau(b),
\end{equation}
for all $b\in (A_\infty)_+$ and $\tau\in T_\infty(A)$. For contractions $b\in C^*(A,X)_+$, the maps $\phi^{(i)}(\cdot)b$ are cpc and order zero, so $\tau(\phi^{(i)}(\cdot)b)$ is a trace on $M_{2(m+1)}$, \cite[Corollary 4.4]{WZ:MJM}. Thus
$$
\tau(\phi^{(i)}(e_{11})b)=\frac{1}{2(m+1)}\tau(\phi^{(i)}(1_{2m})b),
$$
for all $i\in\{0,\dots,m\}$, all contractions $b\in C^*(A,X)_+$ and all $\tau\in T_\infty(A)$. Summing over $i$, we have
\begin{equation}\label{L:S2:1}
\tau\Big(\sum_{i=0}^m\phi^{(i)}(e_{11})b\Big)=\frac{1}{2(m+1)}\tau\Big(\sum_{i=0}^m\phi^{(i)}(1_{2(m+1)})b\Big)\geq\frac{1}{2(m+1)}\tau(b),
\end{equation}
for all $b\in C^*(A,X)_+$ and all $\tau\in T_\infty(A)$.

For $\eta>0$, define 
$$
d_\eta^{(0)}=g_{\eta,2\eta}\Big(\sum_{i=0}^m\phi^{(i)}(e_{11})\Big),\quad d_\eta^{(1)}=1_{A_\infty}-g_{0,\eta}\Big(\sum_{i=0}^m\phi^{(i)}(e_{11})\Big)
$$
so that $d^{(0)}_\eta$ and $d^{(1)}_\eta$ are pairwise orthogonal positive contractions in $A_\infty\cap A'\cap X'$. We have $d^{(0)}_\eta+\eta1_A\geq \sum_{i=0}^m\phi^{(i)}(e_{11})$ so (\ref{L:S2:1}) gives
\begin{equation}\label{L:S2:3}
\tau(d^{(0)}_\eta b)\geq\frac{1}{2(m+1)}\tau(b)-\eta,
\end{equation}
for contractions $b\in C^*(A,X)_+$ and $\tau\in T_\infty(A)$. For a contraction $b\in C^*(A,X)_+$ and $\tau\in T_\infty(A)$ we have
\begin{align}
\tau((1_{A_\infty}-d^{(1)}_\eta)b)&=\tau\Big(g_{0,\eta}\Big(\sum_{i=0}^m\phi^{(i)}(e_{11})\Big)b\Big)\nonumber\\
&\leq\lim_{l\rightarrow\infty}\tau\Big(\Big(\sum_{i=0}^m\phi^{(i)}(e_{11})\Big)^{1/l} b\Big)\nonumber\\
&\leq\sum_{i=0}^m\lim_{l\rightarrow\infty}\tau\left((\phi^{(i)}(e_{11}))^{1/l}b\right)\label{L:S2:4}\\
&=\sum_{i=0}^m\lim_{l\rightarrow\infty}\tau\left((\phi^{(i)})^{1/l}(e_{11})b\right)\nonumber\\
&=\sum_{i=0}^m\lim_{l\rightarrow\infty}\frac{1}{2(m+1)}\tau\left((\phi^{(i)})^{1/l}(1_{2(m+1)})b\right)\label{L:S2:5}\\
&\leq\frac{m+1}{2(m+1)}\tau(b)=\frac{1}{2}\tau(b).\label{L:S2:6}
\end{align}
Here (\ref{L:S2:4}) uses the fact that $\langle \sum_{i=0}^m\phi^{(i)}(e_{11})\rangle\leq \sum_{i=0}^m\langle \phi^{(i)}(e_{11})\rangle$ in the Cuntz semigroup $\Cu(C^*(\phi^{(i)}(e_{11}):i=0,\dots,m))$ and $a\mapsto\lim_{l\rightarrow\infty}\tau(a^{1/l}b)$ is a dimension function on $C^*(\phi^{(i)}(e_{11}):i=0,\dots,m)\subset A_\infty\cap A'\cap X'$.  The equality (\ref{L:S2:5}) follows as for each $i$ and $l$, the map $\tau((\phi^{(i)})^{1/l}(\cdot)b)$ is a trace on $M_{2(m+1)}$, \cite[Corollary 4.4]{WZ:MJM}.  The estimate (\ref{L:S2:6}) gives
\begin{equation}\label{L:S2:7}
\tau(d^{(1)}_\eta b)\geq \Big(1-\frac{1}{2}\Big)\tau(b)=\frac{1}{2}\tau(b)
\end{equation}
for all $\tau\in T_\infty(A)$ and $b\in C^*(A,X)_+$.

Thus
$$
\tau(d^{(i)}_\eta b)\geq\frac{1}{2(m+1)}\tau(b)-\eta
$$
for $i=0,1$, $\tau\in T_\infty(A)$ and contractions $b\in C^*(A,X)_+$. As such the $k=2$ case of the lemma follows from Lemma \ref{W:Extract}.
\end{proof}

Combining the previous two lemmas we obtain order zero maps $M_k\rightarrow A_\infty\cap A'$ which are nowhere small in trace in the presence of compact finite dimensional extremal boundary.
\begin{proposition}\label{P:S3}
Given $m\geq 0$  there is $0<\alpha_{m}\leq 1$ such that, given $k\geq 2$, a simple separable unital  nuclear nonelementary $C^*$-algebra $A$ such that $T(A)\neq\emptyset$ and $\partial_eT(A)$ is compact with $\dim(\partial_eT(A))\leq m$, and a separable subspace $X\subset A_\infty$, there exists a cpc order zero map $\Phi:M_k\rightarrow A_\infty\cap A'\cap X'$ such that
$$
\tau(\Phi(1_k)b)\geq \alpha_{m}\tau(b),
$$
for all $\tau\in T_\infty(A)$ and $b\in C^*(A,X)_+\subset A_\infty$.
\end{proposition}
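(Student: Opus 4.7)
The plan is to combine Lemmas \ref{L:S1} and \ref{L:S2} by using the orthogonal positive contractions from the latter to cut down and glue the order zero maps from the former into a single order zero map. The value $\alpha_m=1/L_{m,m+1}$ will emerge automatically.

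First I would apply Lemma \ref{L:S1} with the given separable subspace $X$ to produce cpc order zero maps $\phi^{(0)},\dots,\phi^{(m)}:M_k\to A_\infty\cap A'\cap X'$ with
\begin{equation*}
\tau\Big(\sum_{i=0}^{m}\phi^{(i)}(1_k)b\Big)\geq\tau(b),\quad \tau\in T_\infty(A),\ b\in(A_\infty)_+.
\end{equation*}
I would then let $Y\subset A_\infty$ be the (still separable) subspace generated by $X$ together with the finite-dimensional spaces $\phi^{(i)}(M_k)$ for $i=0,\dots,m$, and apply Lemma \ref{L:S2} with the integer $m+1$ in place of $k$ and with $Y$ in place of $X$. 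This yields pairwise orthogonal positive contractions $c^{(0)},\dots,c^{(m)}\in A_\infty\cap A'\cap Y'$ satisfying
\begin{equation*}
\tau(c^{(i)}b)\geq \tfrac{1}{L_{m,m+1}}\tau(b),\quad i\in\{0,\dots,m\},\ \tau\in T_\infty(A),\ b\in C^*(A,Y)_+.
\end{equation*}

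Next I would define $\Phi:M_k\to A_\infty$ by
\begin{equation*}
\Phi(x)=\sum_{i=0}^{m}(c^{(i)})^{1/2}\phi^{(i)}(x)(c^{(i)})^{1/2}.
\end{equation*}
Because each $c^{(i)}$ commutes with $\phi^{(i)}(M_k)\subset Y$, every summand is itself a cpc order zero map, and because the $c^{(i)}$ are pairwise orthogonal so are the ranges of different summands; hence $\Phi$ is a cpc order zero map. Similarly, since each $c^{(i)}$ commutes with $A$ and $X$ (being in $A'\cap Y'$) and each $\phi^{(i)}(M_k)$ lies in $A'\cap X'$, the image of $\Phi$ lies in $A_\infty\cap A'\cap X'$.

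For the trace estimate, fix $\tau\in T_\infty(A)$ and $b\in C^*(A,X)_+\subset C^*(A,Y)_+$. Since $c^{(i)}$ commutes with both $\phi^{(i)}(1_k)$ and $b$, and these two latter positive elements commute with one another (as $\phi^{(i)}(1_k)\in A'\cap X'$), the element $\phi^{(i)}(1_k)b$ is a positive element of $C^*(A,Y)$, and
\begin{equation*}
\tau(\Phi(1_k)b)=\sum_{i=0}^{m}\tau\bigl(c^{(i)}\phi^{(i)}(1_k)b\bigr)\geq \tfrac{1}{L_{m,m+1}}\sum_{i=0}^{m}\tau\bigl(\phi^{(i)}(1_k)b\bigr)\geq \tfrac{1}{L_{m,m+1}}\tau(b),
\end{equation*}
where the first inequality is the Lemma \ref{L:S2} estimate applied to each $\phi^{(i)}(1_k)b$ and the second is the Lemma \ref{L:S1} estimate. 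Setting $\alpha_m=1/L_{m,m+1}$ completes the argument. The only delicate point is making sure the separable subspace fed into Lemma \ref{L:S2} is large enough to force $c^{(i)}$ to commute with every $\phi^{(i)}(1_k)$, so that the compressions above are genuinely cpc order zero and the trace manipulation is legitimate; this is handled by the initial enlargement $X\leadsto Y$.
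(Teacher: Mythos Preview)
Your argument is correct and essentially identical to the paper's proof: both apply Lemma~\ref{L:S1} to obtain the $\phi^{(i)}$, enlarge $X$ by the images $\phi^{(i)}(M_k)$, apply Lemma~\ref{L:S2} with $k=m+1$ to obtain the orthogonal $c^{(i)}$ (the paper's $d^{(i)}$), and set $\alpha_m=1/L_{m,m+1}$. The only cosmetic difference is that you write the summands as compressions $(c^{(i)})^{1/2}\phi^{(i)}(x)(c^{(i)})^{1/2}$ whereas the paper writes $\phi^{(i)}(x)d^{(i)}$; since the $c^{(i)}$ commute with $\phi^{(i)}(M_k)$ these coincide.
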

\begin{proof}
Fix $m\geq 0$, $k\geq 2$ and a separable subspace $X\subset A_\infty$. Suppose that $A$ is  simple separable unital  nuclear with $T(A)\neq\emptyset$ and $\partial_eT(A)$  compact and with $\dim(\partial_eT(A))\leq m$.  By Lemma \ref{L:S1}, there are cpc order zero maps $\phi^{(0)},\ldots,\phi^{(m)}:M_k\rightarrow A_\infty\cap A'\cap X'$ with
\begin{equation}\label{P:S3:1}
\tau\Big(\sum_{i=0}^m\phi^{(i)}(1_k)b\Big)\geq\tau(b),
\end{equation}
for all $\tau\in T_\infty(A)$ and all $b\in (A_\infty)_+$. By Lemma \ref{L:S2}, there exists $L_{m,m+1}\in\N$ and pairwise orthogonal contractions $d^{(0)},\dots,d^{(m)}$ in $A_\infty\cap A'\cap X'\cap (\mathrm{span}\{\phi^{(i)}(M_k):i=0,\dots,m\})'$ such that
\begin{equation}\label{P:S3:4}
\tau(d^{(i)}b)\geq \frac{1}{L_{m,m+1}}\tau(b),
\end{equation}
for all $i\in\{0,\dots,m\}$, $b\in C^*(A,X,\phi^{(j)}(M_k):j=0,\dots,m)_+$ and $\tau\in T_\infty(A)$.  

Define $\Phi:M_k\rightarrow A_\infty\cap A'\cap X'$ by
$$
\Phi(x)=\sum_{i=0}^m\phi^{(i)}(x)d^{(i)}.
$$
This is cpc and order zero as the $d^{(i)}$'s are pairwise orthogonal and commute with the image of the $\phi^{(j)}$'s. Given a contraction $b\in C^*(A,X)_+$ and $\tau\in T_\infty(A)$, we have
\begin{align}
\tau(\Phi(1_k)b)&=\tau\Big(\sum_{i=1}^m\phi^{(i)}(1_k)d^{(i)}b\Big)\nonumber\\
&\geq \frac{1}{L_{m,m+1}}\tau\Big(\sum_{i=1}^m\phi^{(i)}(1_k)b\Big)\label{P:S3:2}\\
&\geq\frac{1}{L_{m,m+1}}\tau(b)\label{P:S3:3},
\end{align}
where (\ref{P:S3:2}) follows from (\ref{P:S3:4}) and (\ref{P:S3:3}) from (\ref{P:S3:1}).  Thus we can take $\alpha_m=\frac{1}{L_{m,m+1}}$.
\end{proof}

A geometric sequence argument in the spirit of \cite{W:Invent1,W:Invent2}  can be used to obtain uniformly tracially large order zero maps from Proposition \ref{P:S3}.  The proof we give below uses the estimates from the geometric series argument in \cite[Lemma 5.11]{W:Invent2} but we simplify the calculations a little by taking a maximality approach.  We work abstractly from the conclusion of Proposition \ref{P:S3} and so begin by noting that this implies the conclusion of Lemma \ref{L:S2}. We continue to use the functions $g_{\eta_1,\eta_2}$ defined in (\ref{Defg}).

\begin{lemma}\label{L:GS}
Let $k\geq 2$ and suppose $A$ is a separable unital $C^*$-algebra with the property that there exists $\alpha>0$ with the property that for all separable subspaces $X\subset A_\infty$, there exists a cpc order zero map $\Phi:M_k\rightarrow A_\infty\cap A'\cap X'$ such that
\begin{equation}\label{L:GS:1}
\tau(\Phi(1_k)b)\geq \alpha\tau(b),
\end{equation}
for all $\tau\in T_\infty(A)$ and $b\in C^*(A,X)_+\subset A_\infty$. Then $A$ admits uniformly tracially large cpc order zero maps $M_k\rightarrow A_\infty\cap A'$.
\end{lemma}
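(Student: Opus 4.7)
The plan is an iterative geometric construction in the spirit of \cite{W:Invent1,W:Invent2}: we will inductively build cpc order zero maps $\Phi_n : M_k \to A_\infty \cap A'$ whose images $h_n := \Phi_n(1_k)$ satisfy $\tau(h_n) \geq 1 - \beta^n$ uniformly in $\tau \in T_\infty(A)$, where $\beta = 1-\alpha \in (0,1)$, and then extract a single cpc order zero map $\Phi$ as a diagonal/reindexing limit, with $\tau(\Phi(1_k)) = 1$ for all $\tau \in T_\infty(A)$.

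Starting from $\Phi_0 = 0$, the inductive step runs as follows. Given $\Phi_n$, use the structure theorem for cpc order zero maps to write $\Phi_n(x) = h_n \pi_n(x)$, where $\pi_n : M_k \to \mathcal{M}(C^*(\Phi_n(M_k)))$ is the support $^*$-homomorphism. Apply the hypothesis with the separable subspace $X_n = C^*(A \cup \Phi_n(M_k))$ to produce a cpc order zero map $\Psi_n : M_k \to A_\infty \cap A' \cap X_n'$ with $\tau(\Psi_n(1_k)b) \geq \alpha \tau(b)$ for all $\tau \in T_\infty(A)$ and all positive contractions $b \in C^*(A,X_n)_+$; in particular, $\Psi_n$ commutes with $h_n$ and with $\Phi_n(M_k)$.

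To combine $\Phi_n$ and $\Psi_n$ into $\Phi_{n+1}$ while preserving the order zero property, use functional calculus on $h_n$: for a suitably small $\eta_n > 0$, let $e_n = g_{\eta_n, 2\eta_n}(h_n)$ and $f_n = 1 - g_{0,\eta_n}(h_n)$ (using the piecewise linear functions from (\ref{Defg})), so that $e_n, f_n$ are positive contractions in $C^*(h_n)$ with $e_n f_n = 0$ and $e_n + f_n \leq 1$. Set
\[
\Phi_{n+1}(x) := e_n \pi_n(x) + f_n^{1/2}\Psi_n(x)f_n^{1/2}.
\]
Each summand is cpc order zero, and the orthogonality $e_n f_n = 0$ together with the fact that $e_n, f_n \in C^*(h_n) \subset X_n$ commute with $\Psi_n(M_k)$ forces the cross terms in $\Phi_{n+1}(x)\Phi_{n+1}(y)$ to vanish whenever $xy=0$, so $\Phi_{n+1}$ is cpc order zero with $\Phi_{n+1}(1_k) = e_n + f_n\Psi_n(1_k) \leq 1$. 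The tracial estimate
\[
\tau(\Phi_{n+1}(1_k)) = \tau(e_n) + \tau(f_n\Psi_n(1_k)) \geq \tau(e_n) + \alpha\tau(f_n)
\]
combined with the bounds $\tau(e_n) \geq \tau(h_n) - 2\eta_n$ and a careful spectral analysis of the mass of $h_n$ near the mixing threshold then yields $\tau(\Phi_{n+1}(1_k)) \geq 1 - \beta^{n+1}$ up to an error controlled by $\eta_n$, which is absorbed by choosing $\eta_n$ small enough relative to $\beta^n$.

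Finally, lift each $\Phi_n$ via Lemma \ref{L:Proj} to a central sequence of cpc order zero maps $M_k \to A$, and run a standard diagonal extraction to produce a single central sequence whose induced cpc order zero map $\Phi : M_k \to A_\infty \cap A'$ is uniformly tracially large. The main obstacle is the tracial bookkeeping in the combination step: because $e_n$ and $f_n$ have disjoint spectral supports coming from two different cut-off functions, the spectral mass of $h_n$ in the narrow zone $(\eta_n, 2\eta_n)$ is not captured by either contribution, and controlling this loss geometrically in $n$ is what forces the careful choice of the thresholds $\eta_n$ (this is the core of the geometric series estimate from \cite[Lemma 5.11]{W:Invent2}, which the maximality-style bookkeeping mentioned in the paper further streamlines).
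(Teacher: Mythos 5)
Your overall architecture --- iteratively re-run the hypothesis on the part of the spectrum of $h_n=\Phi_n(1_k)$ where $h_n$ is small, aim for a geometric improvement $1-\tau(h_{n+1})\lesssim(1-\alpha)(1-\tau(h_n))$, and finish by diagonalisation --- is the right one (the paper runs the same loop, packaged as a maximality/contradiction argument rather than an explicit induction). The gap is in the combination step, and it is not repaired by choosing $\eta_n$ small. Because $\Phi_{n+1}$ must be order zero you are forced to take $e_nf_n=0$; since $e_n=g_{\eta_n,2\eta_n}(h_n)$ and $f_n=1_{A_\infty}-g_{0,\eta_n}(h_n)$ are continuous functions of the single element $h_n$, the function $t\mapsto g_{\eta_n,2\eta_n}(t)+\bigl(1-g_{0,\eta_n}(t)\bigr)$ necessarily dips to $0$ at $t=\eta_n$, so spectral mass of $h_n$ near $\eta_n$ contributes essentially nothing to $\tau(\Phi_{n+1}(1_k))=\tau(e_n)+\tau(f_n\Psi_n(1_k))$. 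That mass can be of the same order as the deficiency $1-\tau(h_n)$ you are trying to reduce: if for some $\tau\in T_\infty(A)$ the distribution of $h_n$ is $(1-\beta^n)\delta_1+\beta^n\delta_{\eta_n}$, then $\tau(e_n)=1-\beta^n$ and $\tau(f_n)=0$ (hence $\tau(f_n\Psi_n(1_k))\leq\tau(f_n)=0$), so $\tau(\Phi_{n+1}(1_k))=1-\beta^n$ and the induction stalls. Shrinking $\eta_n$ does not help, because $\eta_n$ must be fixed uniformly over the (uncountably many) traces in $T_\infty(A)$ and nothing controls where the deficient spectral mass of $h_n$ sits; your bound $\tau(e_n)\geq\tau(h_n)-2\eta_n$ is correct but the needed companion bound $\tau(f_n)\geq 1-\tau(h_n)-o(\beta^n)$ is simply false.

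The missing idea is the pair of orthogonal positive contractions $d^{(0)}\perp d^{(1)}$ in $A_\infty\cap A'\cap X'\cap\Phi_0(M_k)'$ with $\tau(d^{(i)}f(\Phi_0)(1_k)b)\geq\gamma\tau(f(\Phi_0)(1_k)b)$, obtained for $\gamma=\alpha/3$ by evaluating a map supplied by the hypothesis at two orthogonal projections of $M_k$ of normalised trace at least $1/3$. These let two \emph{overlapping} cut-offs of $h_0$ be used simultaneously while keeping the two pieces of the new map orthogonal: the paper keeps $g_{2\eps,3\eps}(\Phi_0)$ in full, adds $d^{(0)}(g_{\eps,2\eps}-g_{2\eps,3\eps})(\Phi_0)$, and re-runs the hypothesis against $h=d^{(1)}(g_{0,\eps}-g_{\eps,2\eps})(\Phi_0)(1_k)+(1_{A_\infty}-g_{0,\eps}(\Phi_0)(1_k))$; orthogonality of the cross terms is carried by $d^{(0)}\perp d^{(1)}$ rather than by disjointness of supports of the cut-off functions. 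Since $g_{2\eps,3\eps}+(g_{\eps,2\eps}-g_{2\eps,3\eps})+(g_{0,\eps}-g_{\eps,2\eps})+(1-g_{0,\eps})=1$ is a genuine partition of unity on the spectrum, every spectral point is recovered with weight at least $(\alpha_0-\eps)\gamma$ and there is no dead zone, which is exactly what yields $\alpha_1>\alpha_0$. If you insert this device into your inductive step (accepting the weaker per-step ratio governed by $\gamma$, or adopting the paper's supremum argument so as not to track explicit rates), the remainder of your outline --- order zero of the combined map, the trace estimate against $b\in C^*(A,X)_+$, and the final diagonal extraction --- does go through.
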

\begin{proof}
Fix $k\geq 2$. First note that the hypothesis gives that there exists some $\gamma>0$ with the property that for any separable subspace $X\subset A_\infty$, there exist pairwise orthogonal positive contractions $d^{(0)},d^{(1)}\in A^\infty\cap A'\cap X'$ such that
\begin{equation}\label{L:GS:2}
\tau(d^{(i)}b)\geq \gamma\tau(b)
\end{equation}
for $i=\{0,1\}$, $\tau\in T_\infty(A)$ and $b\in C^*(A,X)_+\subset A_\infty$.  Indeed, one can take a cpc order zero map $\Phi:M_k\rightarrow A_\infty\cap A'\cap X'$ satisfying (\ref{L:GS:1}). For a contraction $b\in C^*(A,X)_+$, $\Phi(\cdot)b$ defines a cpc order zero map on $M_k$ so that $\tau(\Phi(\cdot)b)$ is a trace on $M_k$ for each $\tau\in T_\infty(A)$ (\cite[Corollary 4.4]{WZ:MJM}). As such
$$
\tau(\Phi(e)b)=\frac{\mathrm{Tr}_{M_k}(e)}{\mathrm{Tr}_{M_k}(1_k)}\tau(\Phi(1_k)b)\geq\frac{\mathrm{Tr}_{M_k}(e)}{\mathrm{Tr}_{M_k}(1_k)}\alpha\tau(b)
$$
for all $e\in (M_k)_+$, $\tau\in T_\infty(A)$ and $b\in C^*(A,X)_+$. Taking $d^{(i)}=\Phi(e^{(i)})$ for a pair $e^{(0)},e^{(1)}$ of orthogonal projections in $M_k$ of normalized trace at least $1/3$ we can take $\gamma=\alpha/3$.

Let $\alpha_0>0$ be the supremum of all $\alpha\geq 0$ with the property that for each separable subspace $X\subset A_\infty$, there exists a cpc order zero map $\Phi:M_k\rightarrow A_\infty\cap A'\cap X'$ satisfying (\ref{L:GS:1}).  We must prove that $\alpha_0=1$, so suppose to the contrary that $0<\alpha_0<1$. Fix a separable subspace $X\subset A_\infty$ and take $\eps>0$ such that
\begin{equation}\label{L:GS:6}
\alpha_1=\Big(\big(1-(\alpha_0-\eps)\gamma\big)(\alpha_0-3\eps)+(\alpha_0-\eps)\gamma\Big)>\alpha_0.
\end{equation}

Find a cpc order zero map $\Phi_0:M_k\rightarrow A_\infty\cap A'\cap X'$ such that
\begin{equation}\label{L:GS:4}
\tau(\Phi_0(1_k)b)\geq (\alpha_0-\eps)\tau(b)
\end{equation}
for all $\tau\in T_\infty(A)$ and $b\in C^*(A,X)_+\subset A_\infty$.  Find pairwise orthogonal contractions $d^{(0)},d^{(1)}\in A_\infty\cap A'\cap X'\cap \Phi_0(M_k)'$ such that 
\begin{equation}\label{L:GS:8}
\tau(d^{(i)}f(\Phi_0)(1_k)b)\geq \gamma\tau(f(\Phi_0)(1_k)b)
\end{equation}
for $i=\{0,1\}$, $f\in C_0(0,1]$, $\tau\in T_\infty(A)$ and $b\in C^*(A,X)_+$.

Define $$
\Phi_1(\cdot)=g_{2\eps,3\eps}(\Phi_0)(\cdot)+d^{(0)}\left(g_{\eps,2\eps}-g_{2\eps,3\eps}\right)(\Phi_0)(\cdot).
$$
This is certainly cpc. To see that $\Phi_1$ is order zero, note that for $x\in (M_k)_+$ we have
$$
\Phi_1(x)\leq g_{\eps,2\eps}(\Phi_0)(x)
$$
as $d^{(0)}$ commutes with $C^*(\Phi_0(M_k))$. Given $e,f\in (M_k)_+$ with $ef=0$, we have $$g_{\eps,2\eps}(\Phi_0)(e)^{1/2}g_{\eps,2\eps}(\Phi_0)(f)^{1/2}=0.$$ As $\Phi_1(e)^{1/2}\leq g_{\eps,2\eps}(\Phi_0)(e)^{1/2}$, there exists a sequence $(z_m)_{m=1}^\infty$ of contractions in $A$ with $z_mg_{\eps,2\eps}(\Phi_0)(e)^{1/2}\rightarrow \Phi_1(e)^{1/2}$ (see \cite[Lemma A-1]{H:MMJ}).  Similarly there exists a sequence of contractions $(w_m)_{m=1}^\infty$ in $A$ with $g_{\eps,2\eps}(\Phi_0)(f)^{1/2}w_m\rightarrow \Phi_1(f)^{1/2}$. As such
$$
\Phi_1(e)\Phi_1(f)=\lim_{m\rightarrow\infty}\Big(\Phi_1(e)^{1/2}z_mg_{\eps,2\eps}(\Phi_0)(e)^{1/2}g_{\eps,2\eps}(\Phi_0)(f)^{1/2}w_m\Phi_1(f)^{1/2}\Big)=0.
$$

Define
$$
h=d^{(1)}\left(g_{0,\eps}-g_{\eps,2\eps}\right)(\Phi_0)(1_k)+(1_{A_\infty}-g_{0,\eps}(\Phi_0)(1_k)).
$$
We have $d^{(1)}\perp d^{(0)}$ and $\left(g_{0,\eps}-g_{\eps,2\eps}\right)(\Phi_0)(1_k)\perp g_{2\eps,3\eps}(\Phi_0)(1_k)$ so that  $$d^{(1)}\left(g_{0,\eps}-g_{\eps,2\eps}\right)(\Phi_0)(1_k)\perp\Phi_1(1_k).$$ Also $(1_{A_\infty}-g_{0,\eps}(\Phi_0)(1_k))\perp \Phi_1(1_k)$ so that $h\perp\Phi_1(1_k)$.  

Now use the hypothesis again to find a cpc order zero map $\Phi_2:M_k\rightarrow A_\infty\cap A'\cap X'\cap\{h\}'$ such that
$$
\tau(\Phi_2(1_k)hb)\geq(\alpha_0-\eps)\tau(hb)
$$
for all $b\in C^*(A,X)_+$ and $\tau\in T_\infty(A)$.  The estimate (\ref{L:GS:8}) gives
\begin{eqnarray}
\tau(\Phi_2(1_k)hb)\geq (\alpha_0-\eps)\tau(hb)&\geq &(\alpha_0-\eps)\big(\gamma\tau\big((g_{0,\eps}-g_{\eps,2\eps})(\Phi_0)(1_k)b\big) \nonumber \\
&& +\tau\big((1_{A_\infty}-g_{0,\eps}(\Phi_0)(1_k))b\big)\big)\nonumber\\
&\geq &(\alpha_0-\eps)\gamma\tau\big((1_{A_\infty}-g_{\eps,2\eps}(\Phi_0)(1_k))b\big)\label{L:GS:3}
\end{eqnarray}
as $\gamma\leq 1$.

Define $\Psi:M_k\rightarrow A_\infty\cap A'\cap X'$ by $\Psi(x)=\Phi_1(x)+\Phi_2(x)h$. This is cpc and order zero as $\Phi_1(\cdot)$ and $\Phi_2(\cdot)h$ are cpc and order zero with orthogonal ranges. Then for $b\in C^*(A,X)_+$, we have
\begin{align}
\tau(\Psi(1_k)b)&=\tau(\Phi_1(1_k)b)+\tau(\Phi_2(1_k)hb)\nonumber\\
&\geq \tau\Big(\big(g_{2\eps,3\eps}(\Phi_0)(1_k)+d^{(0)}(g_{\eps,2\eps}-g_{2\eps,3\eps})(\Phi_0)(1_k)\big)b\Big)\nonumber\\
&\quad+(\alpha_0-\eps)\gamma\tau\big((1_{A_\infty}-g_{\eps,2\eps}(\Phi_0)(1_k))b\big)\nonumber\\
&\geq \tau\big(g_{2\eps,3\eps}(\Phi_0)(1_k)b\big)+(\alpha_0-\eps)\gamma\tau\big((1_{A_\infty}-g_{2\eps,3\eps}(\Phi_0)(1_k))b\big)\nonumber\\
&=\big(1-(\alpha_0-\eps)\gamma\big)\tau\big(g_{2\eps,3\eps}(\Phi_0)(1_k)b\big)+(\alpha_0-\eps)\gamma\tau(b)\label{L:GS:5}
\end{align}
using (\ref{L:GS:8}), (\ref{L:GS:3}) and the crude estimate $(\alpha_0-\eps)<1$.  As $g_{2\eps,3\eps}(\Phi_0)(1_k)\geq \Phi_0(1_k)-2\eps 1_A$, (\ref{L:GS:4}) gives
$$
\tau(g_{2\eps,3\eps}(\Phi_0)(1_kb))\geq (\alpha_0-3\eps)\tau(b)
$$
for all $b\in C^*(A,X)_+$.  Combining this with (\ref{L:GS:5}) gives
$$
\tau(\Psi(1_k)b)\geq \Big(\big(1-(\alpha_0-\eps)\gamma\big)(\alpha_0-3\eps)+(\alpha_0-\eps)\gamma\Big)\tau(b)=\alpha_1\tau(b)
$$
for all $b\in C^*(A,X)_+$.  The choice of $\eps$ in (\ref{L:GS:6}) ensures that $\alpha_1>\alpha_0$, giving the required contradiction.
\end{proof}

\begin{theorem}\label{T:UTL}
Let $A$ be a simple separable unital nuclear nonelementary  $C^*$-algebra with $T(A)\neq\emptyset$ and $\partial_eT(A)$ compact and $\dim(\partial_eT(A))<\infty$. Then for each $k\geq 2$, $A$ admits uniformly tracially large cpc order zero maps $M_k\rightarrow A_\infty\cap A'$.
\end{theorem}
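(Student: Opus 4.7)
The plan is to derive the theorem as a direct combination of Proposition \ref{P:S3} and Lemma \ref{L:GS}, since the substantive work has already been packaged into those statements. Fix $k\geq 2$ and set $m=\dim(\partial_eT(A))$, which is finite by hypothesis. Proposition \ref{P:S3} supplies a constant $\alpha_m>0$ depending only on $m$ with the property that for every separable subspace $X\subset A_\infty$ there exists a cpc order zero map $\Phi:M_k\rightarrow A_\infty\cap A'\cap X'$ satisfying
\[
\tau(\Phi(1_k)b)\geq\alpha_m\tau(b),\qquad \tau\in T_\infty(A),\ b\in C^*(A,X)_+.
\]

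Crucially, the constant $\alpha_m$ here is uniform across the choice of separable subspace $X$. This is exactly the hypothesis required to invoke Lemma \ref{L:GS} (with $\alpha=\alpha_m$). Applying Lemma \ref{L:GS} therefore immediately yields a uniformly tracially large cpc order zero map $M_k\rightarrow A_\infty\cap A'$, which is what the theorem asserts.

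Conceptually, the two inputs play complementary roles and there is no additional obstacle to overcome at this stage. Proposition \ref{P:S3} uses the finite covering dimension of $\partial_eT(A)$, via the colouring argument of Lemma \ref{L:S1} and the orthogonalisation step of Lemma \ref{L:S2}, to manufacture order zero maps whose image is \emph{somewhere} tracially large, with a uniform lower bound $\alpha_m$. Lemma \ref{L:GS}, which is purely abstract and in the spirit of the geometric sequence arguments of \cite{W:Invent1,W:Invent2}, then bootstraps this fixed lower bound up to $1$: the argument iteratively splits off the part of the unit on which one is already close to $1$, uses the hypothesis again on the orthogonal complement, and shows that any supremal value of the attainable constant must in fact equal $1$. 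So the proof of the theorem itself is a one-line invocation: fix $k$, invoke Proposition \ref{P:S3} for the given $m$, and feed the resulting uniform $\alpha_m$ into Lemma \ref{L:GS}.
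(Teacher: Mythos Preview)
Your proof is correct and matches the paper's own argument exactly: the paper's proof of Theorem~\ref{T:UTL} is the single sentence ``This follows from Proposition~\ref{P:S3} and Lemma~\ref{L:GS}.'' Your additional commentary on the roles of these two ingredients is accurate and adds useful context, but the logical content is identical.
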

\begin{proof}
This follows from Proposition \ref{P:S3} and Lemma \ref{L:GS}.
\end{proof}

\begin{corollary}\label{MainCor}
Let $A$ be a simple separable unital nuclear nonelementary $C^*$-algebra with $T(A)\neq\emptyset$ and $\partial_eT(A)$ compact and $\dim(\partial_eT(A))<\infty$. If $A$ has strict comparison, then $A$ is $\Z$-stable.
\end{corollary}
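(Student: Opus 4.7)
The plan is essentially a one-line deduction from the machinery already built in the paper. The hypotheses on $A$ — simple, separable, unital, nuclear, nonelementary, with $T(A) \neq \emptyset$ and $\partial_eT(A)$ compact of finite covering dimension — are exactly those of Theorem \ref{T:UTL}. So I would first apply that theorem to obtain, for every $k \geq 2$, a uniformly tracially large cpc order zero map $M_k \to A_\infty \cap A'$.

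Then I would feed these maps, together with the standing strict comparison hypothesis, directly into Theorem \ref{MS} (the Matui--Sato reformulation of $\Z$-stability in terms of uniformly tracially large order zero maps). Its conclusion is precisely that $A$ is $\Z$-stable, which is what we want.

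There is no real obstacle to overcome at this point in the paper; the conceptual and technical difficulty has all been discharged upstream. In particular, the covering-dimension assumption on $\partial_eT(A)$ enters through Lemma \ref{L:Key}, where local order zero maps (constructed via Haagerup's approximate diagonals and Sato's lemma) are patched together using the colouring formulation of finite covering dimension. This yields in Proposition \ref{P:S3} an order zero map that is nowhere small in trace, but only with a dimension-dependent constant $\alpha_m$. The final boost of $\alpha_m$ to $1$ is achieved by the geometric-series maximality argument of Lemma \ref{L:GS}, producing the uniformly tracially large maps that feed Theorem \ref{MS}. Once those steps are in place, the corollary itself is pure bookkeeping.
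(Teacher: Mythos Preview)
Your proposal is correct and matches the paper's own proof essentially verbatim: the corollary is deduced directly by combining Theorem~\ref{T:UTL} (which supplies, for every $k\geq 2$, a uniformly tracially large cpc order zero map $M_k\to A_\infty\cap A'$) with Theorem~\ref{MS} (which, under strict comparison, converts such maps into $\Z$-stability). The additional narrative you give about how Lemma~\ref{L:Key}, Proposition~\ref{P:S3}, and Lemma~\ref{L:GS} feed into Theorem~\ref{T:UTL} is accurate commentary but not part of the proof itself.
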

\begin{proof}
This follows from Theorems \ref{MS} and \ref{T:UTL}.
\end{proof}

\subsection*{Acknowledgements} Part of the research leading to Section \ref{Sect4} of this paper was carried out at the BIRS workshop `Descriptive set theory and functional analysis' in June 2011. We would like to record our gratitude to BIRS for the simulating research environment.  We would also like to thank Taylor Hines for his careful reading of an early version of this manuscript. 

\providecommand{\bysame}{\leavevmode\hbox to3em{\hrulefill}\thinspace}
\providecommand{\MR}{\relax\ifhmode\unskip\space\fi MR }
\providecommand{\MRhref}[2]{%
  \href{http://www.ams.org/mathscinet-getitem?mr=#1}{#2}
}
\providecommand{\href}[2]{#2}

\end{document}